\newtheorem{theorem}{Theorem}[section]
\newtheorem{lemma}[theorem]{Lemma}
\theoremstyle{definition}
\newtheorem{definition}[theorem]{Definition}
\theoremstyle{remark}
\numberwithin{equation}{section}
\begin{document}

\title[Two-Weight Inequality for Essentially Well Localized Operators]{A Two-Weight Inequality for Essentially Well Localized Operators with General Measures}
\author[P. Benge]{Philip Benge}
\thanks{2010 Mathematics Subject Classification: Primary 47A30, Secondary 42B99\\\indent Key Words and Phrases: Well-Localized Operators, Two Weight Inequalities, Carleson Embedding}
\address{Philip Benge, School of Mathematics, Washington University in St. Louis, St. Louis, MO}
\email{benge@wustl.edu}

\begin{abstract}
We develop a new formulation of well localized operators as well as a new proof for the necessary and sufficient conditions to characterize their boundedness between $L^2(\mathbb{R}^n,u)$ and $L^2(\mathbb{R}^n,v)$ for general Radon measures $u$ and $v$.
\end{abstract}

\maketitle
\tableofcontents

\section{Introduction}
We consider the boundedness of the integral operator $$Tf(x)=\int_{\mathbb{R}^n}K(x,y)f(y)dy$$ acting from $L^2(\mathbb{R}^n,u)$ to $L^2(\mathbb{R}^n,v)$, that is, we want to characterize the following inequality $$\left\Vert Tf\right\Vert_{L^2(\mathbb{R}^n,v)}\lesssim \left\Vert f\right\Vert_{L^2(\mathbb{R}^n,u)}$$ for all $f\in L^2(\mathbb{R}^n,u)\equiv\{f:\int_{\mathbb{R}^n} \left\vert f\right\vert^2 u<\infty\}$. As is common in two-weight problems, we will consider the change of variables $d\sigma=\frac{1}{u}dx$, $F=\frac{f}{u}$ and $d\omega=vdx$, which allows us to instead characterize the boundedness of the operator $T(\sigma \cdot)$ from $L^2(\mathbb{R}^n,\sigma)$ to $L^2(\mathbb{R}^n,\omega)$, that is, we want to characterize the inequality $$\left\Vert T(\sigma f)\right\Vert_{L^2(\mathbb{R}^n,\omega)}\lesssim \left\Vert f\right\Vert_{L^2(\mathbb{R}^n,\sigma)}.$$ 

Nazarov, Treil and Volberg in \cite{NTV1} found necessary and sufficient conditions for this inequality in the case when $T$ is a so called well localized operator. The primary examples of such operators are band operators, the Haar shift, Haar multipliers and dyadic paraproducts as well as perfect dyadic operators. 

In this paper we develop a new characterization of well localized operators and provide a new proof showing necessary and sufficient conditions for their boundedness in terms of Sawyer type testing on the operator. As in \cite{NTV1}, we proceed with an axiomatic approach. Rather than assume that our operator $T$ can be represented as an integral operator (which is not always possible), we instead characterize the operator based on how it behaves on an orthonormal basis of Haar-type functions. However, our behavior of interest is a simple support condition.

For example, let $\mathcal{D}$ be the standard dyadic grid in $\mathbb{R}$ and for each dyadic interval $I\in\mathcal{D}$, let $I_R$ and $I_L$ denote the right and left halves of the interval, respectively. Define the Haar function $h_I^0\equiv\frac{1}{\sqrt{\left\vert I\right\vert}}\left({\bf{1}}_{I_R}-{\bf{1}}_{I_L}\right)$ and the averaging function $h_I^1\equiv\frac{1}{\left\vert I\right\vert}{\bf{1}}_I$. Then an operator $T$ is said to be lower triangularly localized if there exists a constant $r>0$ such that for all dyadic intervals $I,J\in\mathcal{D}$ with $\left\vert I\right\vert\leq2\left\vert J\right\vert$, we have $$\left\langle T({\bf{1}}_J),h_I^0\right\rangle=0$$ if $I\not\subset J^{(r)}$ or if $\left\vert I\right\vert\leq 2^{-r}\left\vert J\right\vert$ and $I\not\subset J$. We say that $T$ is well localized if both $T$ and $T^*$ are lower triangularly localized.

Given a sequence $b=\{b_I\}_{I\in\mathcal{D}}$ and a function $f\in L^2(\mathbb{R})$ we define the martingale transform $$T_b f\equiv\sum_{I\in\mathcal{D}}b_I\left\langle f,h_I^0\right\rangle h_I^0$$ and the paraproduct $$P_b f\equiv\sum_{I\in\mathcal{D}}b_I\left\langle f,h_I^1\right\rangle h_I^0.$$ A simple computation shows that these are both well localized with respect to the constant $r=1$. 

We also see that $$T_b h_I^0=b_I h_I^0\text{ and }T^*_b h_I^0=b_I h_I^0$$ as well as $$P_b h_I^0=\sum_{J\subsetneq I}b_J\left\langle h_I^0,h_J^1\right\rangle h_J^0\text{ and }P^*_b h_I^0=b_I h_I^1.$$ Thus these operators satisfy a nice support condition when applied to any Haar function, namely, if $T$ is any of the operators above, we have $\text{supp}(Th_I^0)\subset I$.

For an additional example, we let $S$ be the Haar shift operator defined by $$Sf\equiv\sum_{I\in\mathcal{D}}b_I\left\langle f,h_I^0\right\rangle \left(h_{I_R}^0-h_{I_L}^0\right).$$ Then $S$ is well localized with associated constant $r=2$, and we also have $\text{supp}(S h_I^0)\subset I$ and $\text{supp}(S^* h_I^0)\subset I^{(1)}$, where $I^{(1)}$ denotes the dyadic parent of $I$. In the following section, we formally define this support condition, and in section \ref{WL} we show that this condition is in fact the same as the well localized condition up to a change in the constant $r$.

\section{Definitions and Statement of Results}

To define our orthonormal basis, let $\mathcal{D}^n$ denote the dyadic grid in $\mathbb{R}^n$, and for any $F\in\mathcal{D}^n$, define $\mathcal{D}^{n}_{k}(F)\equiv\{F'\in\mathcal{D}^n:F'\subseteq F, \ell(F')=\frac{1}{2^k}\ell(F)\}$, where $\ell(F)$ denotes the side length of the cube $F$. We further define $\mathcal{D}^n(F)=\bigcup_{k=0}^{\infty}{D_{k}^n(F)}$. From \cite{Wilson}, we have the following lemma.
\begin{lemma}
Let $F\in\mathcal{D}^n$. Then there are $2^n-1$ pairs of sets $\{(E_{F,i}^1,E_{F,i}^2)\}_{i=1}^{2^n-1}$ such that
\begin{enumerate}
\item for each $i$, $|E_{F,i}^1|=|E_{F,i}^2|$;
\item for each $i$, $E_{F,i}^1$ and $E_{F,i}^2$ are non-empty unions of cubes from $\mathcal{D}^{n}(F)$;
\item for every $i\neq j$, exactly one of the following must hold:
\begin{enumerate}
\item $E_{F,i}^1\cup E_{F,i}^2$ is entirely contained in either $E_{F,j}^1$ or $E_{F,j}^2$;
\item $E_{F,j}^1\cup E_{F,j}^2$ is entirely contained in either $E_{F,i}^1$ or $E_{F,i}^2$;
\item $(E_{F,i}^1\cup E_{F,i}^2)\cap(E_{F,j}^1\cup E_{F,j}^2)=\emptyset$.
\end{enumerate}
\end{enumerate}
\end{lemma}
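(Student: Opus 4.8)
The plan is to produce the pairs by iteratively bisecting the collection $\mathcal{D}^n_1(F)$ of the $2^n$ dyadic children of $F$, organized along a complete binary tree of depth $n$. Identify each child with an address $\epsilon=(\epsilon_1,\dots,\epsilon_n)\in\{0,1\}^n$ (say $Q_\epsilon$ is the child occupying the corner of $F$ selected, coordinate by coordinate, by $\epsilon$). For each $k\in\{1,\dots,n\}$ and each string $\delta=(\delta_1,\dots,\delta_{k-1})\in\{0,1\}^{k-1}$ (the empty string when $k=1$), set
\[
E_{k,\delta}^1\equiv\!\!\bigcup_{\substack{\epsilon:\ \epsilon_j=\delta_j\ \text{for}\ j<k\\ \epsilon_k=0}}\!\!Q_\epsilon,
\qquad
E_{k,\delta}^2\equiv\!\!\bigcup_{\substack{\epsilon:\ \epsilon_j=\delta_j\ \text{for}\ j<k\\ \epsilon_k=1}}\!\!Q_\epsilon .
\]
The number of such pairs is $\sum_{k=1}^{n}2^{k-1}=2^n-1$, which is the desired count (equivalently, the number of internal nodes of a depth-$n$ binary tree).

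I would first dispatch properties (1) and (2), which are immediate from the construction: each of $E^1_{k,\delta}$ and $E^2_{k,\delta}$ is a union of $2^{n-k}\ge 1$ children of $F$, hence a non-empty union of cubes from $\mathcal{D}^n_1(F)\subseteq\mathcal{D}^n(F)$; and the two sets in the pair contain the same number of children (the coordinates $\epsilon_{k+1},\dots,\epsilon_n$ remain free in both), so $|E^1_{k,\delta}|=|E^2_{k,\delta}|$.

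The content of the lemma is condition (3), and the key move is to translate set inclusion into the prefix order on binary strings. Put $R_{k,\delta}\equiv E^1_{k,\delta}\cup E^2_{k,\delta}$; this is precisely the union of those children whose address begins with $\delta$, while $E^1_{k,\delta}$ (resp.\ $E^2_{k,\delta}$) is the union of those whose address begins with $\delta$ followed by the bit $0$ (resp.\ $1$). Now take two distinct index pairs $(k,\delta)$ and $(k',\delta')$ and assume without loss of generality $k\le k'$. If $\delta_j=\delta'_j$ for every $j<k$, then necessarily $k<k'$ (otherwise $\delta=\delta'$), so the bit $\delta'_k$ is defined and every address counted in $R_{k',\delta'}$ begins with $\delta$ followed by $\delta'_k$; hence $R_{k',\delta'}$ is contained in whichever of $E^1_{k,\delta},E^2_{k,\delta}$ corresponds to the bit $\delta'_k$, and this is one of the first two alternatives of (3). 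Otherwise $\delta_j\ne\delta'_j$ for some $j<k$, and then $R_{k,\delta}$ and $R_{k',\delta'}$ impose contradictory constraints on the $j$-th coordinate of the address, so they are disjoint, which is alternative (c). Finally these alternatives are mutually exclusive — the $R$'s are non-empty, so any inclusion forces a non-empty intersection, and two simultaneous inclusions would force some $R$ to be a proper subset of itself — so exactly one of the three holds, as required.

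The computation is entirely elementary; the one place that requires care is organizing the index set of pairs so that inclusion becomes the prefix relation, after which the verification of (3) is a short case split. As an alternative I would mention the proof by induction on $n$: split $F$ along its last coordinate into halves $F^-$ and $F^+$, take $(F^-,F^+)$ as the first pair, and lift the $2^{n-1}-1$ pairs supplied by the inductive hypothesis on each of $F^-,F^+$ (regarded as $(n-1)$-dimensional configurations); the nesting and disjointness of the lifted pairs is inherited from the inductive step, the only extra work being to rewrite each lifted set as a finite union of honest dyadic subcubes of $F$.
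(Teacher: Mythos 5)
Your construction is correct, and it is worth noting up front that the paper itself offers no proof of this lemma: it is simply imported from Wilson's paper on paraproducts and the exponential-square class. What you have written is, in essence, the standard construction behind that citation — bisect the $2^n$ children of $F$ one coordinate at a time, indexing the pairs by the internal nodes of a depth-$n$ binary tree — so your argument supplies a self-contained verification of exactly the structure the paper takes for granted. The details check out: the count $\sum_{k=1}^{n}2^{k-1}=2^n-1$ is right; (1) and (2) follow since each half is a union of $2^{n-k}\ge 1$ children of equal volume; and your translation of (3) into the prefix order is the correct key step, with the two cases (common prefix of length $k-1$ forces $k<k'$ and containment of $R_{k',\delta'}$ in the half of $R_{k,\delta}$ selected by $\delta'_k$; a disagreement at some coordinate $j<k$ forces disjointness) covering all possibilities, and your exclusivity argument via non-emptiness and proper containment is sound. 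Two small remarks: disjointness of distinct children (hence of the sets in case (c)) is literal only under the usual half-open convention for dyadic cubes, which you should state; and your construction has the added virtue of delivering the extra facts the paper uses after the lemma but does not include in its statement — each $E_{F,i}^1$, $E_{F,i}^2$ and $E_{F,i}$ is an axis-parallel dyadic rectangle, and the $2^{k-1}$ sets $E_{F,i}$ at level $k$ tile $F$ — so it would be worth one sentence observing that your pairs satisfy these as well. The inductive variant you sketch is equally valid, but the prefix-order bookkeeping is the cleaner route.
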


For simplicity, we let $E_{F,i}=E_{F,i}^1\cup E_{F,i}^2$ and we will define $$\mathcal{H}^n\equiv\{E_{F,i}:{F\in\mathcal{D}^n,1\leq i\leq 2^n-1}\}$$ to be the collection of all rectangles $E_{F,i}$. We note that for all $i$, $E_{F,i}\subseteq F$, however, $E_{F,i}\not\subseteq \bigcup_{k=1}^{\infty}D_{k}^{n}(F)$. We further note that for all $k=0,1,\ldots,n$, we have that $$F=\bigcup_{i=2^{k-1}}^{2^k-1}E_{F,i}.$$ For $r\geq 0$, we define $E_{F,i}^{(r)}$ to be the rectangle of volume $2^r\left\vert E_{F,i}\right\vert$ containing $E_{F,i}$. 

We now define the Haar function $h_{F,i}^{0}$ and the averaging function $h_{F,i}^1$ associated with $E_{F,i}$ by $$h_{F,i}^0=\frac{1}{\sqrt{|E_{F,i}|}}\bigg(\mathbf{1}_{E_{F,i}^2}-\mathbf{1}_{E_{F,i}^1}\bigg)$$ and $$h_{F,i}^1=\frac{1}{|E_{F,i}|}\mathbf{1}_{E_{F,i}}.$$ 
The functions $\{h_{F,i}^0\}_{F\in\mathcal{D}^n,1\leq i \leq 2^n-1}$ form an orthonormal basis for $L^2(\mathbb{R}^n)$.

Given a Radon measure $\sigma$, we let $$h_{F,i}^\sigma=\sqrt{\frac{\sigma(E_{F,i}^1)}{\sigma(E_{F,i})\sigma(E_{F,i}^2)}}{\bf{1}}_{E_{F,i}^2}-\sqrt{\frac{\sigma(E_{F,i}^2)}{\sigma(E_{F,i})\sigma(E_{F,i}^1)}}{\bf{1}}_{E_{F,i}^1}$$ be the weight adapted Haar function if $\sigma(E_{F,i}^1),\sigma(E_{F,i}^2)>0$ and we set $h_{F,i}^\sigma\equiv 0$ if either $\sigma(E_{F,i}^1)=0$ or $\sigma(E_{F,i}^2)=0$. 

We will impose the following structure on this operator: 
\begin{definition}
An operator is said to be {\it essentially well localized} if there exists an $r\geq 0$ such that for all $E_{F,i}$ the following properties hold:
\begin{equation}
\label{SimpleLemma}
\text{supp}(T(\sigma h_{F,i}^\sigma))\subseteq E_{F,i}^{(r)};\text{    }\text{supp}(T^*(\omega h_{F,i}^\omega))\subseteq E_{F,i}^{(r)}.
\end{equation}

\end{definition}

We will establish the two-weight boundedness for any Radon measures $\sigma$ and $\omega$ by adapting the proof strategy found in characterizing the two-weight inequality for the Hilbert Transform (see \cite{H} and \cite{L}). We now state our main theorem.

\begin{theorem}
Let $T$ be essentially well localized for some $r\geq0$. Let $\sigma$ and $\omega$ be two Radon measures on $\mathbb{R}^n$. Then 
\begin{equation}
\label{MainIneq}
\left\Vert T(\sigma f)\right\Vert_{L^2(\mathbb{R}^n,\omega)}\lesssim C \left\Vert f\right\Vert_{L^2(\mathbb{R}^n,\sigma)},\text{      }\left\Vert T^*(\omega f)\right\Vert_{L^2(\mathbb{R}^n,\sigma)}\lesssim C \left\Vert f\right\Vert_{L^2(\mathbb{R}^n,\omega)}
\end{equation}
if and only if for all $E_{F,i}\in\mathcal{H}^n$ and $E_{G,j}\in\mathcal{H}^n$ with $2^{-r}\left\vert E_{F,i}\right\vert\leq\left\vert E_{G,j}\right\vert\leq 2^{r}\left\vert E_{F,i}\right\vert$ and $E_{G,j}\cap E_{F,i}^{(r)}\neq\emptyset$, the following testing conditions hold:
\begin{align}
\label{TestingConditions}
\left\Vert{\bf{1}}_{E_{F,i}}T(\sigma {\bf{1}}_{E_{F,i}})\right\Vert_{L^2(\mathbb{R}^n,\omega)}&\lesssim C_1\left\Vert {\bf{1}}_{E_{F,i}}\right\Vert_{L^2(\mathbb{R}^n,\sigma)};\\
\left\Vert{\bf{1}}_{E_{F,i}}T^*(\omega {\bf{1}}_{E_{F,i}})\right\Vert_{L^2(\mathbb{R}^n,\sigma)}&\lesssim C_2\left\Vert {\bf{1}}_{E_{F,i}}\right\Vert_{L^2(\mathbb{R}^n,\omega)};\\
\label{WeakBoundedness}
\left\vert\left\langle T(\sigma {\bf{1}}_{E_{F,i}}),{\bf{1}}_{E_{G,j}}\right\rangle_\omega\right\vert&\lesssim C_3\sigma(E_{F,i})^{1/2}\omega(E_{G,j})^{1/2}.
\end{align} 
Moreover, we have that $C\simeq C_1+C_2+C_3$.
\end{theorem}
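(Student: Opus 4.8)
The plan is to prove the two implications separately; all implicit constants below depend only on $n$ and $r$, and I write $\langle\cdot,\cdot\rangle_\sigma,\langle\cdot,\cdot\rangle_\omega$ for the pairings in $L^2(\sigma),L^2(\omega)$, using the duality $\langle T(\sigma\varphi),\psi\rangle_\omega=\langle\varphi,T^*(\omega\psi)\rangle_\sigma$. \emph{Necessity} is immediate: assuming \eqref{MainIneq} with constant $C$, taking $f=\mathbf{1}_{E_{F,i}}$ and restricting the output to $E_{F,i}$ gives both lines of \eqref{TestingConditions} with $C_1,C_2\le C$, while Cauchy--Schwarz in $L^2(\omega)$ applied to $\langle T(\sigma\mathbf{1}_{E_{F,i}}),\mathbf{1}_{E_{G,j}}\rangle_\omega$ gives \eqref{WeakBoundedness} with $C_3\le C$; hence $C_1+C_2+C_3\lesssim C$.

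For sufficiency, write $\widetilde C:=C_1+C_2+C_3$; the goal is $|\langle T(\sigma f),g\rangle_\omega|\lesssim\widetilde C\,\|f\|_{L^2(\sigma)}\|g\|_{L^2(\omega)}$. By density I would reduce to $f,g$ bounded and supported in a large dyadic cube $Q_0$ (which lies in $\mathcal{H}^n$, since every dyadic cube is some $E_{F,1}$), and then to finite weighted-Haar expansions $f=\langle f\rangle^\sigma_{Q_0}+\sum_{E_{F,i}\subseteq Q_0}\widehat f_{F,i}h^\sigma_{F,i}$, $g=\langle g\rangle^\omega_{Q_0}+\sum_{E_{G,j}\subseteq Q_0}\widehat g_{G,j}h^\omega_{G,j}$, which converge in $L^2$ by martingale convergence and obey Parseval in the weighted spaces; the two constant terms are controlled by \eqref{TestingConditions}--\eqref{WeakBoundedness} applied with $Q_0$ and are routine, so I focus on $\mathcal{B}:=\sum_{F,i}\sum_{G,j}\widehat f_{F,i}\widehat g_{G,j}\langle T(\sigma h^\sigma_{F,i}),h^\omega_{G,j}\rangle_\omega$. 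The first use of \eqref{SimpleLemma} and its adjoint form is to annihilate every term with $E_{G,j}\cap E_{F,i}^{(r)}=\emptyset$ or $E_{F,i}\cap E_{G,j}^{(r)}=\emptyset$. I then split the surviving pairs by the ratio $|E_{F,i}|/|E_{G,j}|$ into a \emph{near} regime ($2^{-r}\le|E_{F,i}|/|E_{G,j}|\le 2^{r}$), a \emph{below} regime ($|E_{G,j}|<2^{-r}|E_{F,i}|$) and an \emph{above} regime ($|E_{F,i}|<2^{-r}|E_{G,j}|$), the latter two interchanged by $(T,\sigma,f)\leftrightarrow(T^*,\omega,g)$. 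In the near regime the laminar structure of $\mathcal{H}^n$ from the Wilson lemma forces each $(F,i)$ to interact with only $O_{n,r}(1)$ indices $(G,j)$; expanding $h^\sigma_{F,i},h^\omega_{G,j}$ as differences of normalized indicators of $E^{1,2}_{F,i},E^{1,2}_{G,j}$ --- each, like $E_{F,i}^{(r)}$, a union of boundedly many comparable members of $\mathcal{H}^n$ --- and invoking a self-improved form of \eqref{WeakBoundedness} gives $|\langle T(\sigma h^\sigma_{F,i}),h^\omega_{G,j}\rangle_\omega|\lesssim C_3$ on each such pair, so Cauchy--Schwarz together with Bessel's inequality bounds the near part by $C_3\|f\|_{L^2(\sigma)}\|g\|_{L^2(\omega)}$.

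The heart is the below regime (the above regime being dual). Here I reorganize by the fine index: the below part equals $\sum_{G,j}\widehat g_{G,j}\langle T(\sigma\Phi_{G,j}),h^\omega_{G,j}\rangle_\omega$, where $\Phi_{G,j}$ is the sum of the surviving coarse pieces $\widehat f_{F,i}h^\sigma_{F,i}$ with $|E_{F,i}|>2^{r}|E_{G,j}|$. Since $T^*(\omega h^\omega_{G,j})$ is supported in $E_{G,j}^{(r)}$, only $\Phi_{G,j}$ on $E_{G,j}^{(r)}$ contributes, and each such $h^\sigma_{F,i}$ is constant on $E_{G,j}^{(r)}$ \emph{except} when its $E^1$--$E^2$ interface crosses $E_{G,j}^{(r)}$; summing the non-exceptional pieces telescopes (via $F=\bigcup_i E_{F,i}$) to $\phi_{G,j}\mathbf{1}_{E_{G,j}^{(r)}}$ with $\phi_{G,j}$ a $\sigma$-average of $f$ over a cube $\approx E_{G,j}^{(r)}$. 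The resulting main term is $\sum_{G,j}\widehat g_{G,j}\,\phi_{G,j}\,\tau_{G,j}$ with $\tau_{G,j}:=\langle T(\sigma\mathbf{1}_{E_{G,j}^{(r)}}),h^\omega_{G,j}\rangle_\omega$, and the key point is that $\tau_{G,j}=\langle T(\sigma\mathbf{1}_{Q}),h^\omega_{G,j}\rangle_\omega$ for \emph{any} cube $Q\supseteq E_{G,j}^{(r)}$, because $T^*(\omega h^\omega_{G,j})$, being supported in $E_{G,j}^{(r)}$, annihilates $\mathbf{1}_{Q\setminus E_{G,j}^{(r)}}$. Fixing a dyadic $Q\in\mathcal{H}^n$ and applying Bessel's inequality to the orthonormal system $\{h^\omega_{G,j}\}$ together with \eqref{TestingConditions}, $\sum_{E_{G,j}^{(r)}\subseteq Q}|\tau_{G,j}|^{2}\le\|\mathbf{1}_{Q}T(\sigma\mathbf{1}_{Q})\|_{L^2(\omega)}^{2}\lesssim C_1^{2}\,\sigma(Q)$, which is precisely the Carleson packing condition for the masses $|\tau_{G,j}|^2$. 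The Carleson Embedding Theorem then yields $\sum_{G,j}|\phi_{G,j}|^2|\tau_{G,j}|^2\lesssim C_1^2\|f\|_{L^2(\sigma)}^2$, and one further Cauchy--Schwarz, using $\sum|\widehat g_{G,j}|^2\le\|g\|_{L^2(\omega)}^2$, closes the main term; the exceptional interface cubes form a sparse subfamily and are absorbed by the same machinery combined with \eqref{WeakBoundedness}. The above regime is handled symmetrically using the $T^*$ half of \eqref{TestingConditions}. Summing the three regimes gives $|\langle T(\sigma f),g\rangle_\omega|\lesssim\widetilde C\,\|f\|_{L^2(\sigma)}\|g\|_{L^2(\omega)}$, hence the first bound in \eqref{MainIneq} with $C\lesssim\widetilde C$; the second follows by repeating the argument with $T$ and $T^*$, $\sigma$ and $\omega$ interchanged, since the hypotheses are symmetric. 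Together with necessity this gives $C\simeq C_1+C_2+C_3$.

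The step I expect to be the main obstacle is the below/above analysis carried out for an arbitrary pair of Radon measures: rigorously extracting the conditional-expectation (``paraproduct'') structure of the coarse part of $f$ on the cubes $E_{G,j}^{(r)}$ when $\sigma$ may carry atoms --- so that some weighted Haar functions vanish and the telescoping must be organized with care --- and controlling the exceptional ``interface'' cubes. By contrast, the Carleson packing estimate, as sketched above, is a short consequence of the support condition \eqref{SimpleLemma} and the testing condition \eqref{TestingConditions}.
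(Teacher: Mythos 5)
Your outline is correct in substance, but it takes a genuinely different route from the paper in the crucial coarse-versus-fine block. The paper, after the same reduction to mean-zero functions on a dyadic cube and the same treatment of the comparable-scale part via \eqref{WeakBoundedness}, handles the term with the coarser function by a corona construction: stopping rectangles are chosen by the doubling-of-averages condition \eqref{Stoppingg} applied to the coarse function, the coarse function is telescoped to its stopping averages, and everything is closed using only the dual testing half of \eqref{TestingConditions} on the stopping sets together with the Carleson embedding theorem for the stopping family (this is the Hyt\"onen--Lacey style argument the paper says it adapts). You instead run the classical Nazarov--Treil--Volberg paraproduct argument: telescope the coarse part of $f$ on $E_{G,j}^{(r)}$ to the average $\langle f\rangle^{\sigma}_{E_{G,j}^{(r)}}$, observe that the coefficients $\tau_{G,j}=\langle T(\sigma\mathbf{1}_{E_{G,j}^{(r)}}),h^{\omega}_{G,j}\rangle_{\omega}$ are independent of enlarging the indicator (by \eqref{SimpleLemma} for $T^*$), and derive the Carleson packing $\sum_{E_{G,j}^{(r)}\subseteq Q}|\tau_{G,j}|^{2}\lesssim C_1^{2}\sigma(Q)$ from Bessel plus the direct testing condition, then apply Carleson embedding to the $\sigma$-averages of $f$. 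Both mechanisms are sound and give $C\lesssim C_1+C_2+C_3$; your version avoids any stopping-time construction at the cost of the Bessel-packing lemma and the $Q$-independence observation, while the paper's corona avoids the paraproduct structure entirely and only ever tests on stopping sets and on $E_{F,i}^{(r)}$. Two of the difficulties you flag at the end are in fact not obstacles: by the laminar structure of Wilson's construction (each half $E^{1},E^{2}$ is again a union of members of $\mathcal{H}^n$, and dyadic cubes are the sets $E_{Q,1}$), a strictly smaller set of $\mathcal{H}^n$ contained in $E_{F,i}$ lies entirely in one half, so there are no ``interface'' exceptional terms and the telescoping is exact; and the atomic case is handled automatically by the convention $h^{\sigma}_{F,i}\equiv0$ when a half has zero mass, exactly as in the paper. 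The points you should still write out carefully are the reduction to a single dyadic cube (an arbitrary compact support meets several orthants, so you need the paper's orthant splitting, which again uses \eqref{SimpleLemma} to kill cross terms) and the passage from \eqref{WeakBoundedness} on pairs in $\mathcal{H}^n$ to the bound $|\langle T(\sigma h^{\sigma}_{F,i}),h^{\omega}_{G,j}\rangle_{\omega}|\lesssim C_3$ for the boundedly many surviving near-diagonal pairs; the latter step is the same one the paper performs in its term ${\bf A}(f,g)$.
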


Now because each $E_{F,i}\in\mathcal{H}^n$ is a union of cubes $Q\in\mathcal{D}^n$ and the boundedness of $T$ would imply a similar testing condition on cubes, we immediately have the following result.

\begin{theorem}
\label{CubeLocal}
Let $T$ be essentially well localized for some $r\geq0$. Let $\sigma$ and $\omega$ be two Radon measures on $\mathbb{R}^n$. Then 
\begin{equation}
\label{CubeMainIneq}
\left\Vert T(\sigma f)\right\Vert_{L^2(\mathbb{R}^n,\omega)}\lesssim C \left\Vert f\right\Vert_{L^2(\mathbb{R}^n,\sigma)},\text{      }\left\Vert T^*(\omega f)\right\Vert_{L^2(\mathbb{R}^n,\sigma)}\lesssim C \left\Vert f\right\Vert_{L^2(\mathbb{R}^n,\omega)}
\end{equation}
if and only if for all $Q\in\mathcal{D}^n$ and $R\in\mathcal{D}^n$ with $2^{-r}\left\vert Q\right\vert\leq\left\vert R\right\vert\leq 2^{r}\left\vert Q\right\vert$ and $R\cap Q^{(r)}\neq\emptyset$, the following testing conditions hold:
\begin{equation}
\label{CubeTestingConditions}
\left\Vert{\bf{1}}_{Q}T(\sigma {\bf{1}}_{Q})\right\Vert_{L^2(\mathbb{R}^n,\omega)}\lesssim C_1\left\Vert {\bf{1}}_{Q}\right\Vert_{L^2(\mathbb{R}^n,\sigma)},\text{      }\left\Vert{\bf{1}}_{Q}T^*(\omega {\bf{1}}_{Q})\right\Vert_{L^2(\mathbb{R}^n,\sigma)}\lesssim C_2\left\Vert {\bf{1}}_{Q}\right\Vert_{L^2(\mathbb{R}^n,\omega)};
\end{equation}
\begin{equation}
\label{CubeWeakBoundedness}
\left\vert\left\langle T(\sigma {\bf{1}}_{Q}),{\bf{1}}_{R}\right\rangle_\omega\right\vert\lesssim C_3\sigma(Q)^{1/2}\omega(R)^{1/2}.
\end{equation} 
Moreover, we have that $C\simeq C_1+C_2+C_3$.
\end{theorem}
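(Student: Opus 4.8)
The plan is to deduce Theorem~\ref{CubeLocal} from the preceding theorem, using only that every $E_{F,i}\in\mathcal{H}^n$ is a finite disjoint union of dyadic cubes from $\mathcal{D}^n(F)$ --- in the standard Wilson construction the children of $F$, all of side $\ell(F)/2$ --- and that the support hypothesis \eqref{SimpleLemma} persists verbatim when $r$ is enlarged.

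\emph{Necessity of the cube testing conditions} is immediate. Assuming \eqref{CubeMainIneq}, fix $Q\in\mathcal{D}^n$, apply the first inequality to $f=\mathbf{1}_Q$ and discard the cutoff $\mathbf{1}_Q$ on the left (it only shrinks the $L^2(\omega)$ norm); this gives the first estimate of \eqref{CubeTestingConditions} with $C_1\lesssim C$, and the second follows from the hypothesis on $T^*$. For \eqref{CubeWeakBoundedness} expand $\langle T(\sigma\mathbf{1}_Q),\mathbf{1}_R\rangle_\omega$ and apply Cauchy--Schwarz in $L^2(\omega)$ followed by \eqref{CubeMainIneq}. Hence $C_1+C_2+C_3\lesssim C$.

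\emph{Sufficiency.} By the preceding theorem it suffices to recover the $\mathcal{H}^n$ testing conditions \eqref{TestingConditions}--\eqref{WeakBoundedness} from the cube conditions \eqref{CubeTestingConditions}--\eqref{CubeWeakBoundedness}, where we are free to enlarge $r$. Writing $E_{F,i}=\bigsqcup_k Q_k$ and $E_{G,j}=\bigsqcup_l R_l$ as disjoint unions of their $O(2^n)$ constituent cubes, expand
\[
\mathbf{1}_{E_{F,i}}T(\sigma\mathbf{1}_{E_{F,i}})=\sum_{k,l}\mathbf{1}_{Q_k}T(\sigma\mathbf{1}_{Q_l}),\qquad
\langle T(\sigma\mathbf{1}_{E_{F,i}}),\mathbf{1}_{E_{G,j}}\rangle_\omega=\sum_{k,l}\langle T(\sigma\mathbf{1}_{Q_k}),\mathbf{1}_{R_l}\rangle_\omega .
\]
It then suffices to bound each summand of the first sum by $\lesssim(C_1+C_2+C_3)\sigma(Q_l)^{1/2}\omega(Q_k)^{1/2}$ and each of the second by $\lesssim(C_1+C_2+C_3)\sigma(Q_k)^{1/2}\omega(R_l)^{1/2}$: Cauchy--Schwarz over the finitely many indices, together with $\sum_k\omega(Q_k)=\omega(E_{F,i})$ and $\sum_l\sigma(Q_l)=\sigma(E_{F,i})$, then yields \eqref{TestingConditions} and \eqref{WeakBoundedness}, and the $T^*$ testing condition is symmetric. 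The diagonal summands are exactly \eqref{CubeTestingConditions}. For an off-diagonal summand the two cubes are either nested --- then one factors the indicator of the smaller through $T$ or $T^*$ and uses only the testing conditions --- or disjoint; disjoint pairs of comparable size that meet in the relevant dilate are handled by \eqref{CubeWeakBoundedness}, and the remaining disjoint pairs (at controlled size ratio and distance, since each constituent is comparable to its parent and $E_{G,j}\cap E_{F,i}^{(r)}\neq\emptyset$) are treated by expanding $\mathbf{1}_{Q_l}$ in the $\sigma$-adapted Haar basis --- an expansion that, notably, involves no Haar function supported inside $Q_l$ --- and applying \eqref{SimpleLemma} to confine the image of each surviving Haar component, which limits how the scales can interact across $\partial Q_k$; this costs only a replacement of $r$ by $r+O(n)$.

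The step I expect to be the main obstacle is precisely this last class of off-diagonal terms: upgrading the \emph{scalar} weak-boundedness estimate \eqref{CubeWeakBoundedness} to a bound on the genuinely \emph{localized} operator-norm quantity $\|\mathbf{1}_{Q_k}T(\sigma\mathbf{1}_{Q_l})\|_{L^2(\omega)}$ for disjoint cubes, and keeping precise track of how the dilation parameter and the size/adjacency windows in the hypotheses degrade in the passage between an $E_{F,i}$ and its constituent cubes. Everything else --- necessity, the Cauchy--Schwarz recombination, and the equivalence $C\simeq C_1+C_2+C_3$ --- is routine.
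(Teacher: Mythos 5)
Your necessity argument, and the reduction of the scalar condition \eqref{WeakBoundedness} to \eqref{CubeWeakBoundedness} by writing each $E_{F,i}$ as a union of its at most $2^n$ constituent cubes and recombining with Cauchy--Schwarz, are fine; note also that the paper itself derives this theorem in a single sentence, so there is no detailed argument there for you to lean on. The genuine gap is exactly where you flag it, and your proposed mechanism for it fails. To obtain the box testing condition \eqref{TestingConditions} from the cube conditions you must bound the off-diagonal pieces $\left\Vert \mathbf{1}_{Q_k}T(\sigma\mathbf{1}_{Q_l})\right\Vert_{L^2(\omega)}$ for disjoint, adjacent, equal-size cubes $Q_k,Q_l\subset E_{F,i}$. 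But the $\sigma$-adapted Haar expansion of $\mathbf{1}_{Q_l}$ contains only Haar functions $h^\sigma_{H,m}$ with $E_{H,m}\supsetneq Q_l$ (coefficients against boxes inside $Q_l$ vanish by mean zero, disjoint boxes contribute nothing), i.e.\ only arbitrarily large scales survive; for those, \eqref{SimpleLemma} places $T(\sigma h^\sigma_{H,m})$ in $E^{(r)}_{H,m}\supseteq E_{H,m}$, which already contains $Q_k$ once $E_{H,m}\supseteq F$. So the support condition gives no decoupling across the common boundary, and no enlargement of $r$ changes that. Estimating the resulting sum over all ancestors of $Q_l$ using only testing and weak boundedness is precisely the stopping-time/Carleson argument of the main theorem: the implication ``cube conditions $\Rightarrow$ box testing'' is true, but as a consequence of the boundedness you are trying to prove, not by a local expansion. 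A secondary problem: ``we are free to enlarge $r$'' is backwards on the hypothesis side, since enlarging $r$ enlarges the family of pairs on which \eqref{CubeTestingConditions}--\eqref{CubeWeakBoundedness} are assumed; the wider size window (up to $2^{r+n}$) and the pairs with $R_l\cap Q_k^{(r)}=\emptyset$ that your cube-by-cube expansion produces are not in the stated family and cannot simply be assumed.

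The route that actually closes the sufficiency direction is not to pass through \eqref{TestingConditions} at all, but to observe where the main theorem's proof uses its hypotheses: the initial reductions and the terms $\mathbf{B}_2$, $\mathbf{I}$, $\mathbf{II}$ apply norm testing only to $Q_0$, to the stopping sets $S$, and to the dilates $E^{(r)}_{F,i}$, while the diagonal term $\mathbf{A}$ uses only scalar pairings between sets that genuinely interact. Running the same argument with the stopping family chosen inside $\mathcal{D}^n$ (maximal dyadic cubes with doubled $\omega$-average) and with $E^{(r)}_{F,i}$ replaced by the smallest dyadic cube containing it, every norm-testing application lands on a dyadic cube and is covered by \eqref{CubeTestingConditions}, and the pairings in $\mathbf{A}$ reduce to \eqref{CubeWeakBoundedness} via the decomposition you already wrote down --- this is where the size/adjacency window has to be tracked carefully and constants depending on $r$ and $n$ enter. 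That rerun, rather than an upgrade of the scalar bound \eqref{CubeWeakBoundedness} to a localized operator-norm bound on disjoint cubes, is what the paper's ``immediately'' is gesturing at, and it is what your write-up would need to carry out.
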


We are also able to easily extract global testing conditions by noting that the boundedness of $T$ immediately implies the global testing conditions \eqref{GlobalTestingConditions} below, which then imply the local testing conditions \eqref{TestingConditions} and \eqref{WeakBoundedness}. With this we state another corollary.

\begin{theorem}
Let $T$ be essentially well localized for some $r\geq0$. Let $\sigma$ and $\omega$ be two Radon measures on $\mathbb{R}^n$. Then 
\begin{equation}
\label{MainGlobalIneq}
\left\Vert T(\sigma f)\right\Vert_{L^2(\mathbb{R}^n,\omega)}\lesssim C \left\Vert f\right\Vert_{L^2(\mathbb{R}^n,\sigma)},\text{      }\left\Vert T^*(\omega f)\right\Vert_{L^2(\mathbb{R}^n,\sigma)}\lesssim C \left\Vert f\right\Vert_{L^2(\mathbb{R}^n,\omega)}
\end{equation}
if and only if for all $E_{F,i}\in\mathcal{H}^n$, the following testing conditions hold:
\begin{align}
\label{GlobalTestingConditions}
\left\Vert T(\sigma {\bf{1}}_{E_{F,i}})\right\Vert_{L^2(\mathbb{R}^n,\omega)}\lesssim C_1\left\Vert {\bf{1}}_{E_{F,i}}\right\Vert_{L^2(\mathbb{R}^n,\sigma)},\\
\left\Vert T^*(\omega {\bf{1}}_{E_{F,i}})\right\Vert_{L^2(\mathbb{R}^n,\sigma)}\lesssim C_2\left\Vert {\bf{1}}_{E_{F,i}}\right\Vert_{L^2(\mathbb{R}^n,\omega)}.
\end{align}
Moreover, we have that $C\simeq C_1+C_2$.
\end{theorem}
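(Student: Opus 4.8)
The plan is to deduce this theorem from Theorem \ref{CubeLocal} (or equivalently the first main theorem), since the global testing conditions \eqref{GlobalTestingConditions} sit between the boundedness \eqref{MainGlobalIneq} and the local/weak conditions \eqref{TestingConditions}--\eqref{WeakBoundedness} already shown to be equivalent to boundedness. The forward implication is immediate: if \eqref{MainGlobalIneq} holds, then testing $T(\sigma\,\cdot\,)$ against $f={\bf 1}_{E_{F,i}}\in L^2(\sigma)$ gives $\|T(\sigma{\bf 1}_{E_{F,i}})\|_{L^2(\omega)}\le C\|{\bf 1}_{E_{F,i}}\|_{L^2(\sigma)}$, which is the first line of \eqref{GlobalTestingConditions} with $C_1\le C$; symmetrically $C_2\le C$ for the adjoint. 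So the content is the reverse implication together with the norm comparison $C\simeq C_1+C_2$.

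For the reverse direction, I would show that \eqref{GlobalTestingConditions} implies the hypotheses of the already-proven Theorem \ref{CubeLocal}, namely \eqref{CubeTestingConditions} and \eqref{CubeWeakBoundedness}. First, \eqref{CubeTestingConditions} follows from \eqref{GlobalTestingConditions} trivially: for a cube $Q\in\mathcal{D}^n$, decompose ${\bf 1}_Q=\sum_i {\bf 1}_{E_{F,i}}$ over the relevant rectangles (using the identity $F=\bigcup_{i=2^{k-1}}^{2^k-1}E_{F,i}$ from the excerpt, or just the fact that each $E_{F,i}$ is a union of dyadic cubes so conversely each $Q$ is an $E_{F,i}$ for a suitable $F$ and $i$), and note that inserting the cutoff ${\bf 1}_Q$ only decreases the $L^2(\omega)$-norm, so $\|{\bf 1}_Q T(\sigma{\bf 1}_Q)\|_{L^2(\omega)}\le\|T(\sigma{\bf 1}_Q)\|_{L^2(\omega)}\le C_1\|{\bf 1}_Q\|_{L^2(\sigma)}$. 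The weak boundedness \eqref{CubeWeakBoundedness} also follows: for $Q,R$ with comparable volumes and $R\cap Q^{(r)}\neq\emptyset$, Cauchy--Schwarz gives $|\langle T(\sigma{\bf 1}_Q),{\bf 1}_R\rangle_\omega|\le\|T(\sigma{\bf 1}_Q)\|_{L^2(\omega)}\,\|{\bf 1}_R\|_{L^2(\omega)}\le C_1\,\sigma(Q)^{1/2}\omega(R)^{1/2}$, so $C_3\lesssim C_1$. Then Theorem \ref{CubeLocal} yields \eqref{CubeMainIneq}, which is exactly \eqref{MainGlobalIneq}, with $C\lesssim C_1+C_2+C_3\lesssim C_1+C_2$. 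Combining with the forward bound $C_1+C_2\le 2C$ gives $C\simeq C_1+C_2$.

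I expect there will be no serious obstacle, since this is genuinely a corollary packaging argument rather than new analysis; the only point requiring a line of care is making sure the passage between the $E_{F,i}$ formulation in \eqref{GlobalTestingConditions} and the cube formulation used to invoke Theorem \ref{CubeLocal} is clean — i.e., that every dyadic cube $Q$ is one of the $E_{F,i}$ (indeed, $Q=E_{Q,i}$ for appropriate $i$ by the decomposition $F=\bigcup_i E_{F,i}$ of any cube into its $E$'s, or more simply that $\mathcal{D}^n\subseteq\mathcal{H}^n$ after relabeling), so that global testing over $\mathcal{H}^n$ a fortiori gives the cube-level input needed. If one prefers to avoid even that identification, one can instead reprove the reverse implication directly by observing that the cube testing used inside the proof of Theorem \ref{CubeLocal} is only ever applied to sets of the form $E_{F,i}$ arising in the Haar decomposition, for which \eqref{GlobalTestingConditions} supplies the bound directly; either route is routine.
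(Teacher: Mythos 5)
Your proposal is correct and is essentially the paper's argument: the forward direction is the trivial testing of boundedness on indicators, and for the converse the paper likewise notes that the global conditions \eqref{GlobalTestingConditions} imply the local testing and weak boundedness conditions (by restricting, i.e. dropping the indicator cutoff, and by Cauchy--Schwarz) and then invokes the already-proven local theorem, yielding $C\lesssim C_1+C_2$. The only cosmetic difference is that the paper feeds the global conditions directly into the first main theorem on the sets $E_{F,i}\in\mathcal{H}^n$ (so the identification $Q=E_{Q,1}$ of dyadic cubes with elements of $\mathcal{H}^n$ that you use to route through Theorem \ref{CubeLocal} is not needed), a variant you yourself flag at the end.
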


A similar result can be stated for cubes by \eqref{CubeLocal}.

\begin{theorem}
\label{CubeGlobal}
Let $T$ be essentially well localized for some $r\geq0$. Let $\sigma$ and $\omega$ be two Radon measures on $\mathbb{R}^n$. Then 
\begin{equation}
\label{CubeGlobalMainIneq}
\left\Vert T(\sigma f)\right\Vert_{L^2(\mathbb{R}^n,\omega)}\lesssim C \left\Vert f\right\Vert_{L^2(\mathbb{R}^n,\sigma)},\text{      }\left\Vert T^*(\omega f)\right\Vert_{L^2(\mathbb{R}^n,\sigma)}\lesssim C \left\Vert f\right\Vert_{L^2(\mathbb{R}^n,\omega)}
\end{equation}
if and only if for all $Q\in\mathcal{D}^n$, the following testing conditions hold:
\begin{equation}
\label{CubeGlobalTestingConditions}
\left\Vert T(\sigma {\bf{1}}_{Q})\right\Vert_{L^2(\mathbb{R}^n,\omega)}\lesssim C_1\left\Vert {\bf{1}}_{Q}\right\Vert_{L^2(\mathbb{R}^n,\sigma)},\text{      }\left\Vert T^*(\omega {\bf{1}}_{Q})\right\Vert_{L^2(\mathbb{R}^n,\sigma)}\lesssim C_2\left\Vert {\bf{1}}_{Q}\right\Vert_{L^2(\mathbb{R}^n,\omega)}.
\end{equation}
Moreover, we have that $C\simeq C_1+C_2$.
\end{theorem}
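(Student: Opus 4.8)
The plan is to obtain Theorem~\ref{CubeGlobal} as a direct corollary of Theorem~\ref{CubeLocal}, exactly mirroring the way the global $E_{F,i}$-testing theorem is deduced from the first (local) theorem: this statement contains no genuinely new analytic input, only a cheap comparison between the global and the local testing families on cubes.

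First I would dispatch necessity, that is, that \eqref{CubeGlobalMainIneq} implies \eqref{CubeGlobalTestingConditions}. Since $\sigma$ and $\omega$ are Radon measures they are locally finite, so $\sigma(Q),\omega(Q)<\infty$ for every $Q\in\mathcal{D}^n$, and $\mathbf{1}_Q\in L^2(\mathbb{R}^n,\sigma)$ with $\|\mathbf{1}_Q\|_{L^2(\sigma)}=\sigma(Q)^{1/2}$. Applying the first inequality of \eqref{CubeGlobalMainIneq} to $f=\mathbf{1}_Q$ gives the first half of \eqref{CubeGlobalTestingConditions} with $C_1\lesssim C$, and applying the second to $f=\mathbf{1}_Q$ gives the second half with $C_2\lesssim C$; in particular $C_1+C_2\lesssim C$.

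For sufficiency I would check that \eqref{CubeGlobalTestingConditions} forces the hypotheses \eqref{CubeTestingConditions}--\eqref{CubeWeakBoundedness} of Theorem~\ref{CubeLocal} and then simply invoke that theorem. The local testing conditions \eqref{CubeTestingConditions} are immediate from the global ones because $\|\mathbf{1}_Q\,T(\sigma\mathbf{1}_Q)\|_{L^2(\omega)}\le\|T(\sigma\mathbf{1}_Q)\|_{L^2(\omega)}$, multiplication by an indicator only shrinking the $L^2(\omega)$-norm, and symmetrically for $T^*$. For \eqref{CubeWeakBoundedness}, Cauchy--Schwarz in $L^2(\omega)$ together with the global bound gives, for every pair of cubes $Q,R$ — in particular for the restricted admissible family — $|\langle T(\sigma\mathbf{1}_Q),\mathbf{1}_R\rangle_\omega|\le\|T(\sigma\mathbf{1}_Q)\|_{L^2(\omega)}\,\omega(R)^{1/2}\lesssim C_1\,\sigma(Q)^{1/2}\omega(R)^{1/2}$, so \eqref{CubeWeakBoundedness} holds with $C_3\lesssim C_1$. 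Theorem~\ref{CubeLocal} then yields \eqref{CubeGlobalMainIneq} (which is exactly \eqref{CubeMainIneq}) with $C\lesssim C_1+C_2+C_3\lesssim C_1+C_2$; combined with the necessity bound $C_1+C_2\lesssim C$ this gives $C\simeq C_1+C_2$. I expect no real obstacle here: the only points needing a word are the local finiteness of the Radon measures, so that the testing quantities are finite, and the trivial observation that the restricted cube pairs of \eqref{CubeWeakBoundedness} lie among all pairs; all the substantive work is contained in Theorem~\ref{CubeLocal} and the main theorem on which it rests.
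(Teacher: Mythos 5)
Your proposal is correct and is essentially the paper's own route: the paper deduces Theorem~\ref{CubeGlobal} from Theorem~\ref{CubeLocal} exactly as you do, noting that boundedness trivially gives the global cube testing conditions, while the global conditions imply the local ones \eqref{CubeTestingConditions} by restriction and \eqref{CubeWeakBoundedness} by Cauchy--Schwarz. Your added remarks on local finiteness of the Radon measures and the tracking of $C\simeq C_1+C_2$ are fine and consistent with the paper's (very terse) argument.
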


\section{Initial Reductions}

Whenever there is no ambiguity, we will simply write $L^2(\sigma)$ instead of $L^2(\mathbb{R}^n,\sigma)$. We will also write $\sum_{E_{F,i}}$ rather than $\sum_{F\in\mathcal{D}^n}\sum_{i:1\leq i\leq 2^n-1}$.
By duality, we will study the pairing $\left\langle Tf,g\right\rangle_\omega$, where $$\left\langle f,g\right\rangle_{\omega}=\int_{\mathbb{R}^n} fg\omega.$$ We will also consider the martingale expansions of $f$ and $g$ with respect to $\sigma$ and $\omega$, respectively. Namely, $f=\sum_{E_{F,i}}\Delta_{E_{F,i}}^{\sigma}f$ and $g=\sum_{E_{G,j}}\Delta_{E_{G,j}}^{\omega}g$ where $\Delta_{E_{F,i}}^\sigma f=\hat{f}_{\sigma}(E_{F,i})h_{{F,i}}^{\sigma}=\left\langle f,h_{{F,i}}^{\sigma}\right\rangle_\sigma h_{{F,i}}^\sigma$. 

We first make the assumption that $f$ and $g$ are finite linear combinations of indicator functions ${\bf{1}}_{E_{F,i}}$ where $2^{-d}\left\vert Q_0\right\vert\leq\left\vert E_{F,i}\right\vert\leq\left\vert Q_0\right\vert$ for some cube $Q_0$ and some $d>0$. We will obtain our estimates independent of $Q_0$ and $d$ and noting the density of simple functions in $L^2(\sigma)$ will give the result for general $f$ and $g$. 

We now want to reduce to considering functions $f$ and $g$ compactly supported on a dyadic cube $Q_0\in\mathcal{D}^n$. To do this, for $1\leq j\leq 2^n$, let $Q_j\in\mathcal{D}^n$ be dyadic cubes in the $j^\text{th}$ orthant, respectively, so that $Q_0\subseteq \bigcup_{j}Q_j$. Then we can write $f=\sum_{j}f{\bf{1}}_{Q_j}$ and similarly for $g$. So $\left\Vert f\right\Vert_{L^2(\sigma)}^2=\sum_{j}\left\Vert f{\bf{1}}_{Q_j}\right\Vert_{L^2(\sigma)}^2$. We now have
\begin{align*}
\left\langle T (\sigma f),g\right\rangle_\omega&=\sum_{i,j}\left\langle T (\sigma f{\bf{1}}_{Q_i}),g{\bf{1}}_{Q_j}\right\rangle_\omega.
\end{align*}
Analyzing the terms with $i\neq j$ gives $$\left\langle T (\sigma f{\bf{1}}_{Q_i}),g{\bf{1}}_{Q_j}\right\rangle_\omega=\sum_{E_{F,i}\cap Q_i\neq\emptyset}\sum_{E_{G,j}\cap Q_j\neq\emptyset}\left\langle T(\sigma \Delta_{E_{F,i}}^\sigma f),\Delta_{E_{G,j}}^\omega g\right\rangle_\omega$$
where the cubes $E_{F,i}$ are in the $i^\text{th}$ orthant and the cubes $E_{G,j}$ are in the $j^\text{th}$ orthant. Now by property \eqref{SimpleLemma}, this is zero by support considerations. Thus it suffices to show that we have $\left\vert\left\langle T (\sigma f{\bf{1}}_{Q_j}),g{\bf{1}}_{Q_j}\right\rangle_\omega\right\vert\lesssim C\left\Vert f{\bf{1}}_{Q_j}\right\Vert_{L^2(\sigma)}\left\Vert g{\bf{1}}_{Q_j}\right\Vert_{L^2(\omega)}$ because then we have 
\begin{align*}
\left\vert\left\langle T (\sigma f),g\right\rangle_\omega\right\vert=\left\vert\sum_j\left\langle T(\sigma f{\bf{1}}_{Q_j}),g{\bf{1}}_{Q,j}\right\rangle_\omega\right\vert&\lesssim C\sum_{j}\left\Vert f{\bf{1}}_{Q_j}\right\Vert_{L^2(\sigma)}\left\Vert g{\bf{1}}_{Q_j}\right\Vert_{L^2(\omega)}\\
&\lesssim C\left\Vert f\right\Vert_{L^2(\sigma)}\left\Vert g\right\Vert_{L^2(\omega)}.
\end{align*}
So with this, we assume $Q_0\in\mathcal{D}^n$. Now we can write $$f=\sum_{E_{F,i}\subset Q_0}\Delta_{E_{F,i}}^\sigma f + \left\langle f\right\rangle_{Q_0}^\sigma {\bf{1}}_{Q_0}$$ as well as $$\left\Vert f\right\Vert^2_{L^2(\sigma)}=\sum_{E_{F,i}\subset Q_0}\left\Vert\Delta_{E_{F,i}}^\sigma f\right\Vert^2_{L^2(\sigma)} + \left\vert\left\langle f\right\rangle_{Q_0}^\sigma\right\vert^2 \sigma(Q_0)$$ where $\left\langle f\right\rangle_{E_{F,i}}^\sigma=\frac{1}{\sigma(E_{F,i})}\int_{E_{F,i}}f\sigma$ is the average of $f$ with respect to $\sigma$. With this we have
\begin{align*}
\left\langle T (\sigma f),g\right\rangle_\omega&=\sum_{E_{F,i},E_{G,j}\subset Q_0}\left\langle T (\sigma \Delta_{E_{F,i}}^\sigma f),\Delta_{E_{G,j}}^\omega g\right\rangle_\omega+\sum_{E_{F,i}\subset Q_0}\left\langle T(\sigma \Delta_{E_{F,i}}^\sigma f), \left\langle g\right\rangle_{Q_0}^\omega {\bf{1}}_{Q_0}\right\rangle_\omega\\
&+\sum_{E_{G,j}\subset Q_0}\left\langle \left\langle f\right\rangle_{Q_0}^\sigma T(\sigma {\bf{1}}_{Q_0}),\Delta_{E_{G,j}}^\omega g\right\rangle_\omega+\left\langle\left\langle f\right\rangle_{Q_0}^\sigma T(\sigma{\bf{1}}_{Q_0}),\left\langle g\right\rangle_{Q_0}^\omega {\bf{1}}_{Q_0}\right\rangle_\omega.
\end{align*}

For the last three terms, we have the following lemma.
\begin{lemma}
The following estimates hold:
\begin{enumerate}
\item
$\left\vert\sum_{E_{F,i}\subset Q_0}\left\langle T(\sigma \Delta_{E_{F,i}}^\sigma f), \left\langle g\right\rangle_{Q_0}^\omega {\bf{1}}_{Q_0}\right\rangle_\omega\right\vert\lesssim C_2\left\Vert f\right\Vert_{L^2(\sigma)}\left\Vert g\right\Vert_{L^2(\omega)}$;\\
\item $\left\vert\sum_{E_{G,j}\subset Q_0}\left\langle \left\langle f\right\rangle_{Q_0}^\sigma T(\sigma {\bf{1}}_{Q_0}),\Delta_{E_{G,j}}^\omega g\right\rangle_\omega\right\vert\lesssim C_1\left\Vert f\right\Vert_{L^2(\sigma)}\left\Vert g\right\Vert_{L^2(\omega)}$;\\
\item $\left\vert\left\langle\left\langle f\right\rangle_{Q_0}^\sigma T(\sigma{\bf{1}}_{Q_0}),\left\langle g\right\rangle_{Q_0}^\omega {\bf{1}}_{Q_0}\right\rangle_\omega\right\vert\lesssim C_1\left\Vert f\right\Vert_{L^2(\sigma)}\left\Vert g\right\Vert_{L^2(\omega)}.$
\end{enumerate}
\end{lemma}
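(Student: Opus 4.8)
The plan is to handle all three estimates by the same elementary scheme. In each case one (a) collapses the martingale sum by telescoping, using — from the decompositions recorded just above — that $\sum_{E_{F,i}\subset Q_0}\Delta^\sigma_{E_{F,i}}f=f-\langle f\rangle^\sigma_{Q_0}{\bf 1}_{Q_0}$ and $\sum_{E_{G,j}\subset Q_0}\Delta^\omega_{E_{G,j}}g=g-\langle g\rangle^\omega_{Q_0}{\bf 1}_{Q_0}$, together with the accompanying Pythagorean identities, which give $\|f-\langle f\rangle^\sigma_{Q_0}{\bf 1}_{Q_0}\|_{L^2(\sigma)}\le\|f\|_{L^2(\sigma)}$ and the analogue for $g$; (b) pulls the scalar averages out of the pairing, using $|\langle f\rangle^\sigma_{Q_0}|\,\sigma(Q_0)^{1/2}\le\|f\|_{L^2(\sigma)}$ and $|\langle g\rangle^\omega_{Q_0}|\,\omega(Q_0)^{1/2}\le\|g\|_{L^2(\omega)}$, both just Cauchy--Schwarz; (c) inserts the cutoff ${\bf 1}_{Q_0}$ inside the pairing, which is legitimate because — by the reduction made just above — $f$ and $g$, hence also $f-\langle f\rangle^\sigma_{Q_0}{\bf 1}_{Q_0}$ and $g-\langle g\rangle^\omega_{Q_0}{\bf 1}_{Q_0}$, are supported on $Q_0$; and (d) applies Cauchy--Schwarz and the relevant testing hypothesis. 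Throughout we use that $Q_0$ is itself a member of $\mathcal{H}^n$ (namely $Q_0=E_{Q_0,1}$), so that the testing conditions of the main theorem apply to the rectangle $Q_0$; note that only the two norm-type testing conditions (those carrying $C_1$ and $C_2$) are needed here, not the weak-boundedness condition \eqref{WeakBoundedness}.

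For item (1): after telescoping and pulling out $\langle g\rangle^\omega_{Q_0}$, move $T$ across the pairing by the duality $\langle T(\sigma\varphi),\psi\rangle_\omega=\langle\varphi,T^*(\omega\psi)\rangle_\sigma$, so that the sum equals $\langle g\rangle^\omega_{Q_0}\langle f-\langle f\rangle^\sigma_{Q_0}{\bf 1}_{Q_0},\,{\bf 1}_{Q_0}T^*(\omega{\bf 1}_{Q_0})\rangle_\sigma$. Cauchy--Schwarz, the bound $\|f-\langle f\rangle^\sigma_{Q_0}{\bf 1}_{Q_0}\|_{L^2(\sigma)}\le\|f\|_{L^2(\sigma)}$, and the $T^*$-testing hypothesis $\|{\bf 1}_{Q_0}T^*(\omega{\bf 1}_{Q_0})\|_{L^2(\sigma)}\lesssim C_2\,\omega(Q_0)^{1/2}$ then yield the upper bound $|\langle g\rangle^\omega_{Q_0}|\cdot\|f\|_{L^2(\sigma)}\cdot C_2\,\omega(Q_0)^{1/2}\lesssim C_2\|f\|_{L^2(\sigma)}\|g\|_{L^2(\omega)}$.

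For item (2): after telescoping and pulling out $\langle f\rangle^\sigma_{Q_0}$, the sum equals $\langle f\rangle^\sigma_{Q_0}\langle{\bf 1}_{Q_0}T(\sigma{\bf 1}_{Q_0}),\,g-\langle g\rangle^\omega_{Q_0}{\bf 1}_{Q_0}\rangle_\omega$, and Cauchy--Schwarz with the $T$-testing condition \eqref{TestingConditions}, i.e. $\|{\bf 1}_{Q_0}T(\sigma{\bf 1}_{Q_0})\|_{L^2(\omega)}\lesssim C_1\,\sigma(Q_0)^{1/2}$, bounds it by $|\langle f\rangle^\sigma_{Q_0}|\cdot C_1\,\sigma(Q_0)^{1/2}\cdot\|g\|_{L^2(\omega)}\lesssim C_1\|f\|_{L^2(\sigma)}\|g\|_{L^2(\omega)}$. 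Item (3) is the same computation with no telescoping at all: the pairing equals $\langle f\rangle^\sigma_{Q_0}\langle g\rangle^\omega_{Q_0}\langle{\bf 1}_{Q_0}T(\sigma{\bf 1}_{Q_0}),{\bf 1}_{Q_0}\rangle_\omega$, and the same testing condition gives $|\langle f\rangle^\sigma_{Q_0}|\,|\langle g\rangle^\omega_{Q_0}|\,C_1\,\sigma(Q_0)^{1/2}\omega(Q_0)^{1/2}\lesssim C_1\|f\|_{L^2(\sigma)}\|g\|_{L^2(\omega)}$.

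There is no genuine obstacle here; the only two points that need a word of justification are that the localizing indicator ${\bf 1}_{Q_0}$ may be inserted inside each pairing — which is exactly where the earlier reduction to $f$ and $g$ supported on $Q_0$ is used — and that $Q_0$ is itself a rectangle of the family $\mathcal{H}^n$, so that the testing hypotheses apply literally to it; if one prefers to sidestep the latter, one may instead write $Q_0$ as a finite disjoint union of rectangles from $\mathcal{H}^n$ and add the corresponding testing bounds, losing only a dimensional constant.
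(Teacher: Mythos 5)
Your argument is correct and is essentially the paper's own proof: in each item one pulls out the scalar averages, uses that the mean-zero part is supported on $Q_0$ to insert the cutoff ${\bf 1}_{Q_0}$, and finishes with Cauchy--Schwarz and the testing conditions carrying $C_1$ or $C_2$ (your explicit telescoping $\sum_{E_{F,i}\subset Q_0}\Delta^\sigma_{E_{F,i}}f=f-\langle f\rangle^\sigma_{Q_0}{\bf 1}_{Q_0}$ is only a cosmetic rewriting of the paper's keeping the sum of martingale differences). One small caution: the closing fallback of splitting $Q_0$ into several rectangles of $\mathcal{H}^n$ is not as harmless as stated, since the resulting cross terms $\Vert{\bf 1}_{R_j}T(\sigma{\bf 1}_{R_k})\Vert_{L^2(\omega)}$ with $j\neq k$ are not directly covered by the local testing conditions; fortunately it is unnecessary, because $Q_0=E_{Q_0,1}\in\mathcal{H}^n$, exactly as you (and the paper implicitly) use.
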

\begin{proof}
These are immediately controlled by Cauchy-Schwarz and applying the testing hypotheses. We will show the first estimate, and note that the remaining follow similarly.
\begin{align*}
\left\vert\sum_{E_{F,i}\subset Q_0}\left\langle T(\sigma \Delta_{E_{F,i}}^\sigma f), \left\langle g\right\rangle_{Q_0}^\omega {\bf{1}}_{Q_0}\right\rangle_\omega\right\vert&= \left\vert\left\langle g\right\rangle_{Q_0}^\omega\right\vert\left\vert\left\langle T\left(\sigma\sum_{E_{F,i}\subset Q_0}\Delta_{E_{F,i}}^\sigma f\right),{\bf{1}}_{Q_0}\right\rangle_\omega\right\vert\\
&\leq \left\vert\left\langle g\right\rangle_{Q_0}^\omega\right\vert\left\Vert\sum_{E_{F,i}\subset Q_0}\Delta_{E_{F,i}}^\sigma f\right\Vert_{L^2(\sigma)}\left\Vert {\bf{1}}_{Q_0} T^*(\omega{\bf{1}}_{Q_0})\right\Vert_{L^2(\sigma)}\\
&\lesssim C_2 \left\Vert f\right\Vert_{L^2(\sigma)}\left\Vert g\right\Vert_{L^2(\omega)}.
\end{align*}

\end{proof}
Thus it is enough to control only the first term, so we consider functions $f$ and $g$ that have mean zero with respect to $\sigma$ and $\omega$, respectively.

\section{Proof of Main Theorem}

Throughout the proof, we will use the notation $\Pi(f,g)=\left\langle T(\sigma f),g\right\rangle_{\omega}$. We have
\begin{align*}
\left\langle T(\sigma f),g\right\rangle_{\omega}&=\sum_{E_{F,i},E_{G,j}}\Pi(\Delta_{E_{F,i}}^\sigma f,\Delta_{E_{G,j}}^\omega g)\\
&=\left(\sum_{2^{-r}\left\vert E_{F,i}\right\vert\leq\left\vert E_{G,j}\right\vert \leq 2^r\left\vert E_{F,i}\right\vert}+\sum_{\left\vert E_{G,j}\right\vert>2^r\left\vert E_{F,i}\right\vert}+\sum_{\left\vert E_{F,i}\right\vert > 2^r\left\vert E_{G,j}\right\vert}\right)\Pi(\Delta_{E_{F,i}}^\sigma f,\Delta_{E_{G,j}}^\omega g)\\
&=\sum_{k}\sum_{E_{F,i}}\Pi(\Delta_{E_{F,i}}^\sigma f,\Delta_{E_{F,i,k}}^\omega g)+\sum_{E_{G,j}\supsetneq E_{F,i}^{(r)}}\Pi(\Delta_{E_{F,i}}^\sigma f,\Delta_{E_{G,j}}^\omega g)\\
&\indent+\sum_{E_{F,i}\supsetneq E_{G,j}^{(r)}}\Pi(\Delta_{E_{F,i}}^\sigma f,\Delta_{E_{G,j}}^\omega g)\\
&={\bf{A}}(f,g)+{\bf{B}}(f,g)+{\bf{C}}(f,g)
\end{align*}
where we have used property \eqref{SimpleLemma} in the third equality to only consider rectangles with containment and where $E_{F,i,k}$ is the $k^\text{th}$ rectangle $E_{G,j}$ such that $2^{-r}\left\vert E_{F,i}\right\vert\leq\left\vert E_{G,j}\right\vert \leq 2^r\left\vert E_{F,i}\right\vert$  and $\Pi(\Delta_{E_{F,i}}^\sigma f,\Delta_{E_{G,j}}^\omega g)\neq0$, for some ordering of this finite set. We note that all sets $E_{F,i,k}$ will be contained in $E_{F,i}^{(r)}$ and have length at least $2^{-r}\left\vert E_{F,i}\right\vert$, which gives that there are $M=M(r,n)=\left(2^{n(2r+1)}-1\right)/\left(2^n-1\right)$ such sets. We will consider the first two sums only, as the third sum is symmetric to ${\bf{B}}(f,g)$.
\begin{align*}
\left\vert{\bf{A}}(f,g)\right\vert&=\left\vert\sum_{k}\sum_{E_{F,i}\in\mathcal{D}^n}\Pi(\Delta_{E_{F,i}}^\sigma f,\Delta_{E_{F,i,k}}^\omega g)\right\vert\\
&=\left\vert\sum_{k}\sum_{{E_{F,i}}\in\mathcal{D}^n}\hat{f}_\sigma({E_{F,i}})\hat{g}_\omega({E_{F,i,k}})\Pi(h_{F,i}^\sigma,h_{F,i,k}^\omega)\right\vert\\
&\leq\sum_{k}\sup_{{E_{F,i}}}\left\vert \Pi(h_{F,i}^\sigma,h_{F,i,k}^\omega)\right\vert\left\Vert f\right\Vert_{L^2(\sigma)}\left\Vert g\right\Vert_{L^2(\omega)}\\
&\lesssim M C_3 \left\Vert f\right\Vert_{L^2(\sigma)}\left\Vert g\right\Vert_{L^2(\omega)}
\end{align*}
where we have used \eqref{WeakBoundedness} in the final inequality. We now need only estimate $${\bf{B}}(f,g)=\sum_{E_{G,j}\supsetneq E_{F,i}^{(r)}}\Pi(\Delta_{E_{F,i}}^\sigma f,\Delta_{E_{G,j}}^\omega g).$$
We now define suitable stopping rectangles in $\mathcal{H}^n$. We initialize our construction with $S_0\equiv Q_0$, and we let $\mathcal{S}\equiv\{S_0\}$. In the inductive step, for a minimal stopping rectangle $S$, we let $\text{ch}_{\mathcal{S}}(S)$ be the set of all maximal $\mathcal{H}^n$ children $S'$ of S such that the following holds: 
\begin{equation}
\label{Stoppingg}
\frac{1}{\omega(S')}\int_{S'}\left\vert g\right\vert d\omega>2\frac{1}{\omega(S)}\int_{S}\left\vert g\right\vert d\omega.
\end{equation}

We see immediately that 
\begin{align*}
\sum_{S'\in \text{ch}_{\mathcal{S}}(S)}\omega(S')\leq\frac{1}{2}\omega(S),
\end{align*}
which gives us the Carleson condition
\begin{align*}
\sum_{S\in {\mathcal{S}},S\subseteq Q}\omega(S)\leq 2\omega(Q).
\end{align*}
We note that by the well-known dyadic Carleson embedding theorem (see \cite{Chung}), this condition implies that for all $g\in L^2(\omega)$
\begin{equation}
\sum_{S\in\mathcal{S}}\omega(S)\left\vert\left\langle g\right\rangle_S^\omega\right\vert^2\lesssim\left\Vert g\right\Vert_{L^2(\omega)}^2.
\end{equation}

For every cube ${E_{F,i}}\subseteq Q_0$, we define the stopping parent $$\pi {E_{F,i}}\equiv \text{min}\{S\in\mathcal{S}:S\supseteq {E_{F,i}}\}.$$ Now define the projections $$P_S^{\omega}g=\sum_{{E_{G,j}}:\pi {E_{G,j}}=S}\Delta_{{E_{G,j}}}^\omega g,\text{    }\tilde{P}_S^{\sigma}f=\sum_{{E_{F,i}}:\pi {E_{F,i}^{(r)}}=S}\Delta_{{E_{F,i}}}^\sigma f.$$ So $f=\sum_{S\in\mathcal{S}}\tilde{P}_{S}^\sigma f$, and similarly $g=\sum_{S\in\mathcal{S}}{P}_{S}^\omega g$. With this, we have
\begin{align*}
{\bf{B}}(f,g)&=\sum_{S,S'\in\mathcal{S}}{\bf{B}}(\tilde{P}_S^\sigma f,P_{S'}^\omega g)\\
&=\sum_{S\in\mathcal{S}}{\bf{B}}(\tilde{P}_S^\sigma f,P_{S}^\omega g) +\sum_{S,S'\in\mathcal{S},S'\supsetneq S}{\bf{B}}(\tilde{P}_S^\sigma f,P_{S'}^\omega g)\\
&={\bf{B}}_1(f,g)+{\bf{B}}_2(f,g).
\end{align*}
We note that we do not get any contribution from the stoppping cubes $S\supsetneq S'$ because we are reduced to the case ${E_{F,i}}^{(r)}\subsetneq {E_{G,j}}$. We will now handle ${\bf{B}}_2(f,g)$ first:
\begin{align*}
{\bf{B}}_2(f,g)&=\sum_{S,S'\in\mathcal{S},S'\supsetneq S}{\bf{B}}(\tilde{P}_S^\sigma f,P_{S'}^\omega g)\\
&=\sum_{S\in\mathcal{S}}{\bf{B}}\left(\tilde{P}_S^\sigma f,\sum_{S'\in\mathcal{S},S'\supsetneq S}P_{S'}^\omega g\right)\\
&=\sum_{S\in\mathcal{S}}\sum_{{E_{G,j}}\supsetneq S}\Pi(\tilde{P}_S^\sigma f,{\bf{1}}_S\Delta_{E_{G,j}}^\omega g)\\
&=\sum_{S\in\mathcal{S}}\left\langle g\right\rangle_{S}^\omega \Pi(\tilde{P}_S^\sigma f,{\bf{1}}_{S}).
\end{align*}
In the third and fourth equalities, we have used $T(\sigma \tilde{P}_S^\sigma f)={\bf{1}}_S T(\sigma\tilde{P}_S^\sigma f)$ by property \eqref{SimpleLemma} and further that $$\left\langle g\right\rangle_S^\omega{\bf{1}}_S={\bf{1}}_S\sum_{E_{G,j}\supsetneq S}\Delta_{E_{G,j}}^\omega g.$$ With this we have
\begin{align*}
\left\vert {\bf{B}}_2(f,g)\right\vert&\leq \sum_{S\in\mathcal{S}}\left\vert\left\langle g\right\rangle_S^\omega \right\vert\left\Vert \tilde{P}_S^\sigma f\right\Vert_{L^2(\sigma)}\left\Vert{\bf{1}}_S T^*(\omega{\bf{1}}_S)\right\Vert_{L^2(\sigma)}\\
&\leq C_2 \left\Vert f\right\Vert_{L^2(\sigma)}\left(\sum_{S\in\mathcal{S}}\left\vert\left\langle g\right\rangle_S^\omega\right\vert^2\omega(S)\right)^{1/2}\\
&\lesssim C_2 \left\Vert f\right\Vert_{L^2(\sigma)}\left\Vert g\right\Vert_{L^2(\omega)}
\end{align*}
where the last inequality follows by the Carleson Embedding Theorem.

This leaves us only needing to estimate the term ${\bf{B}}_1(f,g)$. First, we will set $B_{S}(f,g)={\bf{B}}(\tilde{P}_S^\sigma f,P_S^\omega g)$. Then we have that ${\bf{B}}_{1}(f,g)=\sum_{S\in\mathcal{S}}B_S(f,g)$. We now have
\begin{align*}
B_S(f,g)&= \sum_{{E_{F,i}^{(r)}}\subsetneq {E_{G,j}}\subseteq S, \pi {E_{F,i}^{(r)}}=\pi {E_{G,j}}=S} \Pi(\Delta_{{E_{F,i}}}^\sigma f,\Delta_{E_{G,j}}^\omega)\\
&=\sum_{{E_{F,i}^{(r)}}\subseteq S, \pi {E_{F,i}^{(r)}}=S}\Pi\left(\Delta_{E_{F,i}}^\sigma f,{\bf{1}}_{E_{F,i}^{(r)}}\sum_{{E_{G,j}}:{E_{F,i}^{(r)}}\subsetneq {E_{G,j}}\subseteq S,\pi {E_{G,j}}=S}\Delta_{E_{G,j}}^\omega g\right)\\
&=\sum_{{E_{F,i}^{(r)}}\subseteq S, \pi {E_{F,i}^{(r)}}=S}\Pi\left(\Delta_{E_{F,i}}^\sigma f,\left\langle g\right\rangle_{E_{F,i}^{(r)}}^\omega{\bf{1}}_{E_{F,i}^{(r)}}-\left\langle g\right\rangle_S^\omega{\bf{1}}_S\right)\\
&=\sum_{{E_{F,i}^{(r)}}\subseteq S, \pi {E_{F,i}^{(r)}}=S}\Pi\left(\Delta_{E_{F,i}}^\sigma f,\left\langle g\right\rangle_{E_{F,i}^{(r)}}^\omega{\bf{1}}_{E_{F,i}^{(r)}}\right)-\sum_{{E_{F,i}^{(r)}}\subseteq S, \pi {E_{F,i}^{(r)}}=S}\Pi\left(\Delta_{E_{F,i}}^\sigma f,\left\langle g\right\rangle_S^\omega{\bf{1}}_S\right)\\
&=\bf{I-II}.
\end{align*}
Recalling by the stopping condition \eqref{Stoppingg} we have that $\left\vert\left\langle g\right\rangle_{E_{F,i}^{(r)}}\right\vert\leq\left\langle\left\vert g\right\vert\right\rangle_{E_{F,i}^{(r)}}\leq 2\left\langle\left\vert g\right\vert\right\rangle_S.$ With this, we have for the first term
\begin{align*}
\left\vert \bf{I}\right\vert&=\left\vert \sum_{{E_{F,i}^{(r)}}\subseteq S, \pi {E_{F,i}^{(r)}}=S}\Pi\left(\Delta_{E_{F,i}}^\sigma f,\left\langle g\right\rangle_{E_{F,i}^{(r)}}^\omega{\bf{1}}_{E_{F,i}^{(r)}}\right)\right\vert\\
&\lesssim \sum_{{E_{F,i}^{(r)}}\subseteq S, \pi {E_{F,i}^{(r)}}=S}\left\langle \left\vert g\right\vert\right\rangle_S^\omega\left\vert\Pi\left(\Delta_{E_{F,i}}^\sigma f,{\bf{1}}_{E_{F,i}^{(r)}}\right)\right\vert\\
&\lesssim \left\langle \left\vert g\right\vert\right\rangle_S^\omega\sum_{{E_{F,i}^{(r)}}\subseteq S, \pi {E_{F,i}^{(r)}}=S}\left\Vert\Delta_{E_{F,i}}^\sigma f\right\Vert_{L^2(\sigma)}\left\Vert{\bf{1}}_{E_{F,i}^{(r)}} T^*(\omega {\bf{1}}_{E_{F,i}^{(r)}})\right\Vert_{L^2(\sigma)}\\
&\lesssim C_2\left\langle \left\vert g\right\vert\right\rangle_S^\omega\sum_{{E_{F,i}^{(r)}}\subseteq S, \pi {E_{F,i}^{(r)}}=S}\left\Vert\Delta_{E_{F,i}}^\sigma f\right\Vert_{L^2(\sigma)}\left\Vert{\bf{1}}_{E_{F,i}^{(r)}}\right\Vert_{L^2(\omega)}\\
&\lesssim C_2\left\langle \left\vert g\right\vert\right\rangle_S^\omega \left(\sum_{{E_{F,i}^{(r)}}\subseteq S, \pi {E_{F,i}^{(r)}}=S}\left\Vert\Delta_{E_{F,i}}^\sigma f\right\Vert^2_{L^2(\sigma)}\right)^{1/2}\left(\sum_{{E_{F,i}^{(r)}}\subseteq S, \pi {E_{F,i}^{(r)}}=S}\omega({E_{F,i}})\right)^{1/2}\\
&\lesssim C_2\left\langle \left\vert g\right\vert\right\rangle_S^\omega \omega(S)^{1/2}\left\Vert \tilde{P}_S f\right\Vert_{L^2(\sigma)}.
\end{align*}
For the second term, we have
\begin{align*}
\left\vert \bf{II}\right\vert&\leq\left\langle\left\vert g\right\vert\right\rangle_S^\omega\left\vert\sum_{{E_{F,i}^{(r)}}\subseteq S, \pi {E_{F,i}^{(r)}}=S}\Pi(\Delta_{E_{F,i}}^\sigma f,{\bf{1}}_{S})\right\vert\\
&=\left\langle\left\vert g\right\vert\right\rangle_S^\omega\left\vert\Pi\left(\sum_{{E_{F,i}^{(r)}}\subseteq S, \pi {E_{F,i}^{(r)}}=S}\Delta_{E_{F,i}}^\sigma f,{\bf{1}}_{S}\right)\right\vert\\
&\leq \left\langle\left\vert g\right\vert\right\rangle_S^\omega\left\Vert {\bf{1}}_S T^*(\omega {\bf{1}}_S) \right\Vert_{L^2(\sigma)}\left\Vert \tilde{P}_S^\sigma f\right\Vert_{L^2(\sigma)}\\
&\lesssim C_2\left\langle\left\vert g\right\vert\right\rangle_S^\omega\omega(S)^{1/2}\left\Vert \tilde{P}_S^\sigma f\right\Vert_{L^2(\sigma)}.\\
\end{align*}
With this, we have finally that
\begin{align*}
\left\vert{\bf{B}_1}(f,g)\right\vert&=\left\vert\sum_{S\in\mathcal{S}}B_S(f,g)\right\vert\\
&\lesssim C_2 \sum_{S\in\mathcal{S}}\left\langle\left\vert g\right\vert\right\rangle_S^\omega\omega(S)^{1/2}\left\Vert \tilde{P}_S^\sigma f\right\Vert_{L^2(\sigma)}\\
&\lesssim C_2\left\Vert f\right\Vert_{L^2(\sigma)}\left(\sum_{S\in\mathcal{S}}\left(\left\langle\left\vert g\right\vert\right\rangle_S^\omega\right)^2\omega(S)\right)^{1/2}\\
&\lesssim C_2\left\Vert f\right\Vert_{L^2(\sigma)}\left\Vert g\right\Vert_{L^2(\omega)}
\end{align*}
where we again use the Carleson Embedding Theorem in the last inequality.

So we have now established $C\lesssim C_1+C_2+C_3$. To obtain the other inequality, we notice that we have $$C\gtrsim\left\Vert T\right\Vert_{L^2(\sigma)\rightarrow L^2(\omega)}\gtrsim\sup_{{E_{F,i}}}\frac{\left\Vert{\bf{1}}_{E_{F,i}} T(\sigma{\bf{1}}_{{E_{F,i}}})\right\Vert_{L^2(\omega)}}{\left\Vert{\bf{1}}_{{E_{F,i}}}\right\Vert_{L^2(\sigma)}}=C_1.$$
Similarly, we have $C\gtrsim\left\Vert T\right\Vert_{L^2(\sigma)\rightarrow L^2(\omega)}\gtrsim C_2$ and $C\gtrsim\left\Vert T\right\Vert_{L^2(\sigma)\rightarrow L^2(\omega)}\gtrsim C_3$. This indeed gives $$C\simeq C_1+C_2+C_3.$$

\section{Well Localized Operators}
\label{WL}

We recall from \cite{NTV1} that well localized operators have the following definition:

\begin{definition}
$T$ is said to be lower triangularly localized if there exists a constant $r>0$ such that for all cubes $R$ and $Q$ with $\left\vert R\right\vert\leq2\left\vert Q\right\vert$ and for all $\omega$-Haar functions on $R$ $h_R^\omega$, we have $$\left\langle T(\sigma{\bf{1}}_Q),h_R^\omega\right\rangle_\omega=0$$ if $R\not\subset Q^{(r)}$ or if $\left\vert R\right\vert\leq 2^{-r}\left\vert Q\right\vert$ and $R\not\subset Q$.

We say that $T$ is well localized if both $T$ and $T^*$ are lower triangularly localized.
\end{definition}

We will now show that well localized operators are essentially well localized. Let $T$ be a well localized operator associated with some $r>0$. Fix a cube $E_{F,i}$ and let $E_{G,j}$ be any cube with $\left\vert E_{G,j}\right\vert=\left\vert E_{F,i}\right\vert$ and $E_{F,i}^{(r)}\cap E_{G,j}=\emptyset$. So $$T (\sigma h_{F,i}^\sigma){\bf{1}}_{E_{G,j}}=\sum_{E_{H,k}\subset E_{G,j}}\Delta_{H,k}^\omega T (\sigma h_{F,i}^\sigma) +\left\langle T (\sigma h_{F,i}^\sigma)\right\rangle_{E_{G,j}}^\omega{\bf{1}}_{E_{G,j}}.$$ Now since $T$ is well localized, it is immediate that $\left\langle T (\sigma h_{F,i}^\sigma)\right\rangle_{E_{G,j}}^\omega=0$. So we have $$T (\sigma h_{F,i}^\sigma){\bf{1}}_{E_{G,j}}=\sum_{E_{H,k}\subset E_{G,j}}\Delta_{H,k}^\omega T (\sigma h_{F,i}^\sigma).$$ Now $$\Delta_{H,k}^\omega T (\sigma h_{F,i}^\sigma)=\frac{\sqrt{\sigma(E_{F,i}^1)}}{\sqrt{\sigma(E_{F,i})\sigma(E_{F,i}^2)}}\Delta_{H,k}^\omega T(\sigma {\bf{1}}_{E_{F,i}^2})-\frac{\sqrt{\sigma(E_{F,i}^2)}}{\sqrt{\sigma(E_{F,i})\sigma(E_{F,i}^1)}}\Delta_{H,k}^\omega T(\sigma {\bf{1}}_{E_{F,i}^1}).$$ For $E_{H,k}\subseteq E_{G,j}$, we clearly have $\left\vert E_{H,k}\right\vert\leq2\left\vert E_{F,i}^1\right\vert$ and $\left\vert E_{H,k}\right\vert\leq2\left\vert E_{F,i}^2\right\vert$. So applying the well localized property to each term gives $\Delta_{H,k}^\omega T (\sigma h_{F,i}^\sigma)=0$. 

Now for any rectangle $Q$ with $Q \cap E_{F,i}^{(r)}=\emptyset$, we can write $Q\subseteq\bigcup Q_k$ where $Q_k\cap E_{F,i}^{(r)}=\emptyset$ and $\left\vert Q_k\right\vert =\left\vert E_{F,i}\right\vert$. So we have $T(\sigma h_{F,i}^\sigma){\bf{1}}_Q={\bf{1}}_{Q}\sum_{k}T(\sigma h_{F,i}^\sigma){\bf{1}}_{Q_k}=0$. A similar calculation shows $T^*(\omega h_{F,i}^\omega){\bf{1}}_Q=0$. So we have that $T$ is essentially well localized.

This computation also gives the following characterization for essentially well localized operators.

\begin{theorem}
An operator $T$ is essentially well localized for some $r\geq 0$ if and only if for all $Q\in\mathcal{D}^n$ and $E_{F,i}\in\mathcal{H}^n$ with $\left\vert E_{F,i}\right\vert\leq2\left\vert Q\right\vert$ and $E_{F,i}\not\subset Q^{(r+1)}$, we have $$\left\langle T(\sigma{\bf{1}}_Q),h_{F,i}^\omega\right\rangle_\omega=0$$ and $$\left\langle T^*(\omega{\bf{1}}_Q),h_{F,i}^\sigma\right\rangle_\sigma=0.$$
\end{theorem}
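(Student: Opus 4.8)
\noindent\emph{Plan.} The statement is an equivalence, and I would prove the two implications separately. Throughout I use three elementary facts about the system $\mathcal{H}^{n}$: any two of its members are nested or disjoint (the cited Lemma of Wilson together with the laminarity of $\mathcal{D}^{n}$); $E_{F,i}^{(r)}$ is the unique member of $\mathcal{H}^{n}$ of volume $2^{r}|E_{F,i}|$ containing $E_{F,i}$, equivalently the $r$-th ancestor of $E_{F,i}$ in the $\mathcal{H}^{n}$-hierarchy; and, since the $\mathcal{H}^{n}$-parent of each half $E_{F,i}^{\ell}$ is $E_{F,i}$ itself, one has $\bigl(E_{F,i}^{\ell}\bigr)^{(r+1)}=E_{F,i}^{(r)}$ and hence $Q_{s}^{(r+1)}\subseteq E_{F,i}^{(r)}$ for every dyadic cube $Q_{s}\subseteq E_{F,i}^{\ell}$. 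The duality $\langle T(\sigma\varphi),\psi\rangle_{\omega}=\langle\varphi,T^{*}(\omega\psi)\rangle_{\sigma}$ is the other main tool.

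I would prove "essentially well localized $\Rightarrow$ the two testing identities vanish" first, as it is short. Fix $Q\in\mathcal{D}^{n}$ and $E_{F,i}\in\mathcal{H}^{n}$ with $|E_{F,i}|\le 2|Q|$ and $E_{F,i}\not\subset Q^{(r+1)}$ (if $h^{\sigma}_{F,i}$ or $h^{\omega}_{F,i}$ is identically zero there is nothing to prove). By duality and the support conditions, $\langle T(\sigma\mathbf 1_{Q}),h^{\omega}_{F,i}\rangle_{\omega}=\langle\mathbf 1_{Q},T^{*}(\omega h^{\omega}_{F,i})\rangle_{\sigma}=\int_{Q\cap E_{F,i}^{(r)}}T^{*}(\omega h^{\omega}_{F,i})\,d\sigma$ and $\langle T^{*}(\omega\mathbf 1_{Q}),h^{\sigma}_{F,i}\rangle_{\sigma}=\int_{Q\cap E_{F,i}^{(r)}}T(\sigma h^{\sigma}_{F,i})\,d\omega$, so it suffices to show $Q\cap E_{F,i}^{(r)}=\emptyset$. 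If this intersection were nonempty then $Q$ and $E_{F,i}^{(r)}$ would be nested; if $E_{F,i}^{(r)}\subseteq Q$ then $E_{F,i}\subseteq Q\subseteq Q^{(r+1)}$, a contradiction; and if $Q\subseteq E_{F,i}^{(r)}$ then $E_{F,i}^{(r)}$ and $Q^{(r+1)}$ are nested members of $\mathcal{H}^{n}$ both containing $Q$, while $|E_{F,i}^{(r)}|=2^{r}|E_{F,i}|\le 2^{r+1}|Q|\le 2^{n(r+1)}|Q|=|Q^{(r+1)}|$ forces $E_{F,i}^{(r)}\subseteq Q^{(r+1)}$, again contradicting $E_{F,i}\not\subset Q^{(r+1)}$.

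For the converse, "the two testing identities vanish $\Rightarrow$ essentially well localized," I would rerun the computation already given in this section for "well localized $\Rightarrow$ essentially well localized," replacing the appeal to lower triangular localization by the hypothesized vanishing of $\langle T(\sigma\mathbf 1_{Q}),h^{\omega}_{F,i}\rangle_{\omega}$ and $\langle T^{*}(\omega\mathbf 1_{Q}),h^{\sigma}_{F,i}\rangle_{\sigma}$. Fix $E_{F,i}$ with $h^{\sigma}_{F,i}\not\equiv 0$ and an arbitrary $E_{G,j}\in\mathcal{H}^{n}$ with $|E_{G,j}|=|E_{F,i}|$ and $E_{G,j}\cap E_{F,i}^{(r)}=\emptyset$, and expand $\mathbf 1_{E_{G,j}}T(\sigma h^{\sigma}_{F,i})$ in the $\omega$-martingale system based at $E_{G,j}$. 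Writing $h^{\sigma}_{F,i}$ through the indicators of the dyadic cubes $Q_{s}\subseteq E_{F,i}$ and invoking the $T$- or $T^{*}$-form of the hypothesis, according to the relative sizes of $E_{H,k}$ and $Q_{s}$, kills every martingale difference $\Delta^{\omega}_{H,k}T(\sigma h^{\sigma}_{F,i})$ with $E_{H,k}\subseteq E_{G,j}$; the admissibility of each application reduces, after one or two further uses of duality, to non-containment statements such as $E_{H,k}\not\subset Q_{s}^{(r+1)}$, which follow from $E_{G,j}\cap E_{F,i}^{(r)}=\emptyset$ by the nestedness argument of the previous paragraph. The one surviving coefficient is the $\omega$-average of $T(\sigma h^{\sigma}_{F,i})$ over $E_{G,j}$, which is disposed of by the same duality manipulations (this is the step noted to be immediate in the prior computation). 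Thus $T(\sigma h^{\sigma}_{F,i})\equiv 0$ on $E_{G,j}$; covering any cube disjoint from $E_{F,i}^{(r)}$ by such $E_{G,j}$, exactly as before, yields $\operatorname{supp}T(\sigma h^{\sigma}_{F,i})\subseteq E_{F,i}^{(r)}$, and the statement for $T^{*}$ is symmetric. Tracking the constant confirms that the same $r$ is produced, which is why the hypothesis must be phrased with $Q^{(r+1)}$.

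The step I expect to be the real work is the bookkeeping in this converse: because the hypothesis only asserts vanishing when the first argument is a dyadic cube, whereas $h^{\sigma}_{F,i}$ is assembled from the boxes $E_{F,i}^{1},E_{F,i}^{2}$, one is forced to descend to the constituent cubes and, depending on which scale dominates, to pass between the $T$- and $T^{*}$-forms of the hypothesis, and separately to verify the vanishing of the average coefficient — precisely the manipulation already carried out for well localized operators. Every individual step is governed by the three structural facts about $\mathcal{H}^{n}$ recorded at the start, so I do not anticipate any essentially new analytic difficulty; the forward implication, by contrast, is immediate once $E_{F,i}^{(r)}$ is recognised as an $\mathcal{H}^{n}$-ancestor and nestedness is used.
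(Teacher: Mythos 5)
Your forward implication is correct: duality plus the support condition reduces everything to showing $Q\cap E_{F,i}^{(r)}=\emptyset$, and your laminarity-and-volume comparison does exactly that (and is insensitive to whether $Q^{(r+1)}$ is read as the $\mathcal{H}^n$-ancestor or the dyadic ancestor). This is in substance what the paper intends, since it offers no separate argument for this direction.

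The converse, however, has a genuine gap, and it sits precisely in the step you compress into ``invoking the $T$- or $T^*$-form of the hypothesis, according to the relative sizes of $E_{H,k}$ and $Q_{s}$.'' The hypothesis only pairs the indicator of a \emph{dyadic cube} with a Haar function, under the size restriction $\vert E\vert\le 2\vert Q\vert$. Descending from $h^{\sigma}_{F,i}$ to its constituent cubes forces $\vert Q_{s}\vert=2^{-n}\vert F\vert$, so the $T$-form only kills coefficients with $\vert E_{H,k}\vert\le 2^{1-n}\vert F\vert$; dually, applying the $T^{*}$-form to the constituent cubes $R_{t}$ of $h^{\omega}_{H,k}$ requires $\vert E_{F,i}\vert\le 2\vert R_{t}\vert$. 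For $n\ge 2$ these two ranges do not meet (already for two equal dyadic squares neither condition holds), and duality cannot repair a size condition. The same problem hits your ``one surviving coefficient'': the average over $E_{G,j}$ becomes $\langle T^{*}(\omega\mathbf{1}_{Q}),h^{\sigma}_{F,i}\rangle_{\sigma}$ for cubes $Q\subseteq E_{G,j}$, and when $E_{G,j}$ is the $\mathcal{H}^n$-sibling of $E_{F,i}$ the containment condition $E_{F,i}\not\subset Q^{(r+1)}$ fails at the admissible scale while the size condition fails at smaller scales; the corresponding step in the paper's earlier computation concerned well localized operators and cannot simply be quoted here. (In $n=1$ both difficulties vanish because the halves of an interval are dyadic intervals, which is why your plan does work there.) The gap is not merely technical: take $n\ge2$, $r=0$, $\sigma=\omega$ Lebesgue, $F$ a dyadic square with $\mathcal{H}^n$-sibling square $S$, and $T(\sigma f)=\langle f,h^{\sigma}_{F,1}\rangle_{\sigma}\,\mathbf{1}_{S}$. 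Every pairing constrained by the hypothesis vanishes: a nonzero $T$-pairing forces $Q\subsetneq F$ and $E_{H,k}\supsetneq S$, hence $\vert E_{H,k}\vert\ge 2^{n+1}\vert Q\vert>2\vert Q\vert$; a nonzero $T^{*}$-pairing forces $E_{H,k}=E_{F,1}$ and either $Q=S$ or $Q\supseteq\hat F$, and in both cases $F\subset Q^{(r+1)}$. Yet $\text{supp}\,T(\sigma h^{\sigma}_{F,1})=S\not\subset F=E^{(0)}_{F,1}$, so the vanishing conditions with $Q^{(r+1)}$ do not return the same $r$, contrary to your closing claim. Your bookkeeping (and the computation you propose to rerun) closes only if the hypothesis is allowed to range over indicators of $\mathcal{H}^n$-sets, so that one may test against $\mathbf{1}_{E^{1}_{F,i}},\mathbf{1}_{E^{2}_{F,i}}$ and against the halves of $E_{G,j}$ for the average term, or if you concede a dimensional loss in the localization parameter; as written, the converse is unproved.
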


However, if $\left\vert E_{F,i}\right\vert\leq 2^{-(r+1)}\left\vert Q\right\vert$ and $E_{F,i}\not\subset Q$, then we have that $E_{F,i}^{(r)}\cap Q=\emptyset$. With this, we immediately have the following characterization of essentially well localized operators.

\begin{theorem}
An operator $T$ is essentially well localized for some $r\geq 0$ if and only if $T$ is well localized for $r+1$.
\end{theorem}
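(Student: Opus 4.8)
The plan is to deduce each implication from the material just developed: the characterization of essential well localization in terms of vanishing pairings $\langle T(\sigma\mathbf{1}_Q),h_{F,i}^\omega\rangle_\omega$ (and its adjoint) whenever $|E_{F,i}|\le 2|Q|$ and $E_{F,i}\not\subset Q^{(r+1)}$, together with the geometric observation that $|E_{F,i}|\le 2^{-(r+1)}|Q|$ and $E_{F,i}\not\subset Q$ force $E_{F,i}^{(r)}\cap Q=\emptyset$. The only substantive work is bookkeeping: translating between the cube-indexed $\omega$-Haar functions appearing in the definition of well localization and the rectangle-indexed basis $\{h_{F,i}^\omega\}$, and checking that the single unit of slack between $r$ and $r+1$ is consumed exactly once in each direction.

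For the direction \emph{essentially well localized for $r$ $\Rightarrow$ well localized for $r+1$}, suppose \eqref{SimpleLemma} holds; by the previous theorem $\langle T(\sigma\mathbf{1}_Q),h_{F,i}^\omega\rangle_\omega=0$ and $\langle T^*(\omega\mathbf{1}_Q),h_{F,i}^\sigma\rangle_\sigma=0$ whenever $|E_{F,i}|\le 2|Q|$ and $E_{F,i}\not\subset Q^{(r+1)}$. Since the hypotheses are symmetric in $(T,\sigma,\omega)$ and $(T^*,\omega,\sigma)$, it suffices to verify that $T$ is lower triangularly localized for $r+1$. Fix cubes $R,Q\in\mathcal{D}^n$ with $|R|\le 2|Q|$ and an $\omega$-Haar function $h_R^\omega$ on $R$, say $h_R^\omega=h_{R,k}^\omega$ with support $E_{R,k}\subseteq R$ (the case $h_{R,k}^\omega\equiv 0$ being trivial), and distinguish two cases. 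If $R\not\subset Q^{(r+1)}$, then, $R$ and $Q^{(r+1)}$ being dyadic cubes with $|R|\le 2|Q|\le 2^{r+1}|Q|=|Q^{(r+1)}|$, they must be disjoint, hence $E_{R,k}\cap Q^{(r+1)}=\emptyset$, so $E_{R,k}\not\subset Q^{(r+1)}$, and $|E_{R,k}|\le|R|\le 2|Q|$; the previous theorem kills the pairing. If instead $|R|\le 2^{-(r+1)}|Q|$ and $R\not\subset Q$, then $R$ and $Q$ are disjoint dyadic cubes, so $E_{R,k}\cap Q=\emptyset$ and $|E_{R,k}|\le 2^{-(r+1)}|Q|$; the geometric observation gives $E_{R,k}^{(r)}\cap Q=\emptyset$, and since \eqref{SimpleLemma} gives $\text{supp}(T^*(\omega h_{R,k}^\omega))\subseteq E_{R,k}^{(r)}$ we conclude $\langle T(\sigma\mathbf{1}_Q),h_{R,k}^\omega\rangle_\omega=\langle\mathbf{1}_Q,T^*(\omega h_{R,k}^\omega)\rangle_\sigma=0$. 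Thus both clauses of lower triangular localization hold.

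For the converse, \emph{well localized for $r+1$ $\Rightarrow$ essentially well localized for $r$}, I would rerun the computation at the start of this section, now with attention to constants. Fix $E_{F,i}$ and let $E_{G,j}$ be any dyadic cube with $|E_{G,j}|=|E_{F,i}|$ and $E_{G,j}\cap E_{F,i}^{(r)}=\emptyset$. The key point is that $(E_{F,i}^s)^{(r+1)}=E_{F,i}^{(r)}$ for $s=1,2$, because both are the rectangle of volume $2^{r+1}|E_{F,i}^s|=2^r|E_{F,i}|$ containing $E_{F,i}^s$. Expanding $T(\sigma h_{F,i}^\sigma)\mathbf{1}_{E_{G,j}}$ into its martingale increments plus its $\omega$-average over $E_{G,j}$ as before, each increment $\Delta_{H,k}^\omega T(\sigma\mathbf{1}_{E_{F,i}^s})$ with $E_{H,k}\subseteq E_{G,j}$ vanishes by lower triangular localization for $r+1$, since $E_{H,k}\subseteq E_{G,j}$ is disjoint from $(E_{F,i}^s)^{(r+1)}=E_{F,i}^{(r)}$; the average term vanishes by the lower triangular property exactly as in that computation. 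Covering an arbitrary set disjoint from $E_{F,i}^{(r)}$ by dyadic cubes of volume $|E_{F,i}|$ disjoint from $E_{F,i}^{(r)}$ then gives $\text{supp}(T(\sigma h_{F,i}^\sigma))\subseteq E_{F,i}^{(r)}$, and the symmetric argument handles $T^*$, so \eqref{SimpleLemma} holds with constant $r$.

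I expect the only delicate point to be this last paragraph: recognizing that being well localized for $r+1$, rather than merely for $r$, is precisely what aligns the relevant ancestor rectangle $(E_{F,i}^s)^{(r+1)}$ with $E_{F,i}^{(r)}$, and confirming that the vanishing of the low-frequency average term uses only the lower triangular axiom and no quantitative bound on $T$. Everything else reduces to the elementary fact that two dyadic cubes are nested or disjoint, used to pass from a statement about a cube $R$ to the corresponding statement about the support $E_{R,k}$ of its Haar functions.
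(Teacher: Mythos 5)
Your first direction (essentially well localized for $r$ $\Rightarrow$ well localized for $r+1$) is correct and is essentially the paper's route: the first clause of lower triangular localization comes from the preceding characterization theorem, and the second clause from the observation that $\left\vert E_{F,i}\right\vert\leq 2^{-(r+1)}\left\vert Q\right\vert$ and $E_{F,i}\not\subset Q$ force $E_{F,i}^{(r)}\cap Q=\emptyset$, combined with \eqref{SimpleLemma}. (Minor quibble: $Q^{(r+1)}$ is in general a dyadic rectangle rather than a cube, but the nested-or-disjoint property you actually need still holds for the family $\mathcal{H}^n$.)

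The gap is in the converse, exactly at the point you flagged but did not verify: the average term. In the Section \ref{WL} computation, the vanishing of $\left\langle T(\sigma h_{F,i}^\sigma)\right\rangle_{E_{G,j}}^\omega$ comes from the lower triangular property of $T^*$ applied with $Q=E_{G,j}$ and $R=E_{F,i}$, which requires $E_{F,i}\not\subset E_{G,j}^{(\rho)}$ for the localization parameter $\rho$. When $\rho=r$ this holds, since any common ancestor of $E_{F,i}$ and $E_{G,j}$ has volume at least $2^{r+1}\left\vert E_{F,i}\right\vert>\left\vert E_{G,j}^{(r)}\right\vert$ (this is the unstated reason the paper may call it immediate). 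When $\rho=r+1$, as in your setting, $E_{G,j}^{(r+1)}$ has volume exactly $2^{r+1}\left\vert E_{F,i}\right\vert$ and can contain $E_{F,i}$: take $r=0$ and $E_{G,j}$ the sibling half of $E_{F,i}^{(1)}$, so $E_{G,j}\cap E_{F,i}^{(0)}=\emptyset$ yet $E_{G,j}^{(1)}=E_{F,i}^{(1)}\supset E_{F,i}$; then neither clause of well localization at level $r+1$ applies, and the average is not forced to vanish ``exactly as in that computation.'' You spent the single unit of slack on the increments (via $(E_{F,i}^s)^{(r+1)}=E_{F,i}^{(r)}$), and none is left for the average. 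The theorem is still true and the repair is short: run the expansion over rectangles $Q$ of volume $\tfrac12\left\vert E_{F,i}\right\vert$ disjoint from $E_{F,i}^{(r)}$ (equivalently, split $E_{G,j}$ into its two halves when treating the average). For such $Q$ one still has $\left\vert E_{F,i}\right\vert\leq 2\left\vert Q\right\vert$, while $Q^{(r+1)}$ has volume exactly $2^{r}\left\vert E_{F,i}\right\vert$, so $E_{F,i}\subset Q^{(r+1)}$ would force $Q^{(r+1)}=E_{F,i}^{(r)}$, contradicting $Q\cap E_{F,i}^{(r)}=\emptyset$; hence the $T^*$ clause kills the average over every such $Q$, the increments inside $Q$ vanish as in your argument, and covering the complement of $E_{F,i}^{(r)}$ by such $Q$ yields \eqref{SimpleLemma} with constant $r$. (Alternatively, this direction follows at once from the backward implication of the preceding characterization theorem, whose hypothesis is precisely the first clause of well localization at level $r+1$ --- that is the paper's route, though the same half-size issue is buried in that theorem's proof as well.)
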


Having an alternate characterization for well localized operators allows us to easily classify some operators as the following example shows. 

\begin{definition}
An operator $T$ is said to be an essentially perfect dyadic operator if for some $r\geq 0,$ $$T(\sigma f)(x)=\int_{\mathbb{R}} K(x,y)f(y)\sigma(y)dy$$ for $x\not\in\text{supp}(f)$, where $$K(x,y)\leq\frac{1}{\left\vert x-y\right\vert}$$ and $$\left\vert K(x,y)-K(x,y')\right\vert +\left\vert K(x,y)-K(x',y)\right\vert=0$$ whenever $x,x'\in I\in\mathcal{D}$, $y,y'\in J\in\mathcal{D}$ where $I^{(r)}\cap J=\emptyset$ and $I\cap J^{(r)}=\emptyset$.
\end{definition}
If $r=0$, we recover the perfect dyadic operators first introduced in \cite{AHMTT}.

Let $T:L^2(\sigma)\rightarrow L^2(\omega)$ be an essentially perfect dyadic operator and let $f=\sigma h_I^\sigma$ and $x\not\in I^{(r)}$. Then
for all $y,y'\in I$, we have $\left\vert K(x,y)-K(x,y')\right\vert=0$. So $K(x,\cdot)$ is constant on $I$. With this we have
\begin{align*}
T(\sigma h_I^\sigma)(x)&=\int_{I} K(x,y)h_I^\sigma(y)\sigma(y)dy\\
&=0.
\end{align*}

We can write $$T^*(\omega g)(y)=\int_{\mathbb{R}} \overline{K(x,y)}g(x)\omega(x)dx.$$ Now let $g=\omega h_J^\omega$ and let $y\not\in J^{(r)}$. Then for all $x,x'\in J$, we have $\left\vert \overline{K(x,y)}-\overline{K(x',y)}\right\vert=\left\vert K(x,y)-K(x',y)\right\vert=0$. So $\overline{K(\cdot,y)}$ is constant on $J$. As above, we have
\begin{align*}
T^*(\omega h_J^\omega)(y)&=\int_{J} \overline{K(x,y)}h_J^\omega(x)\omega(x)dx\\
&=0.
\end{align*}
So we have that $T$ is essentially well localized.

\section*{Acknowledgment}

The author would like to thank Brett Wick for his helpful conversations and support (NSF DMS grants \#1560955  and \#1603246).

\begin{bibdiv}
\begin{biblist}

\bib{AHMTT}{article}{
author={Auscher, P.},
author={Hofmann, S.},
author={Muscalu, C.},
author={Tao, T.},
author={Thiele, C.},
title={Carleson Measures, Trees, Extrapolation, and $T(b)$ Theorems}
}

\bib{Chung}{article}{
author={Chung, Daewon},
title={Weighted Inequalities for Multivariable Dyadic Paraproducts}
}

\bib{H}{article}{
author={Hyt{\"o}nen, Tuomas P.},
title={The Two-Weight Inequality for the Hilbert Transform With General Measures}
}

\bib{L}{article}{
author={Lacey, Michael T.},
title={Two weight inequality for the Hilbert transform: A primer}
}

\bib{NTV1}{article}{
author={Nazarov, F},
author={Treil, S.},
author={Volberg, A.},
title={Two Weight Inequalities for Individual Haar Multipliers and Other Well Localized Operators}
}

\bib{Wilson}{article}{  
author={Wilson, J. M.}, 
title={Paraproducts and the Exponential-Square Class}
}

\end{biblist}
\end{bibdiv}

\end{document}